\newtheorem{theorem}{Theorem}
\newtheorem{proposition}[theorem]{Proposition}
\begin{document}
	\begin{abstract}
		We prove $L^{p}$-boundedness of oscillating multipliers on some classes of rank
		one locally symmetric spaces.
	\end{abstract}
	
	\title[Oscillating multipliers ]{Oscillating multipliers on rank one locally
		symmetric spaces}
	\author{Effie Papageorgiou}
	\email{papageoe@math.auth.gr}
	\curraddr{Department of Mathematics, Aristotle University of Thessaloniki,
		Thessaloniki 54.124, Greece }
	\subjclass[2000]{42B15, 42B20, 22E30, 22E40, 58G99}
	\keywords{Oscillating multipliers, Locally symmetric spaces, Kunze and Stein
		phenomenon, Convergence type discrete groups}
	\maketitle
	
	\section{Introduction and statement of the results}
	
	Oscillating multipliers on $\mathbb{R}^{n}$ are bounded functions of the
	type
	\begin{equation*}
	m_{\alpha ,\beta }(\xi )=\left\Vert \xi \right\Vert ^{-\beta }e^{i\left\Vert
		\xi \right\Vert ^{\alpha }}\theta \left( \xi \right) ,
	\end{equation*}%
	where $\alpha ,\beta >0$ and $\theta \in C_{0}^{\infty }\left( \mathbb{R}%
	\right) $ which vanishes near zero, and equals to $1$ outside the ball $%
	B\left( 0,2\right) $. Let $T_{\alpha ,\beta }$ be the operator which in the
	Fourier transform variables is given by
	\begin{equation*}
	\widehat{(T_{\alpha ,\beta }f)}\left( \xi \right) =m_{\alpha ,\beta }(\xi )%
	\hat{f}\left( \xi \right) \text{, \ }f\in C_{0}^{\infty }\left( \mathbb{R}%
	^{n}\right) ,
	\end{equation*}%
	i.e. $T_{\alpha ,\beta }$ is a convolution operator with kernel the inverse
	Fourier transform of $m_{\alpha ,\beta }$. The $L^{p}$-boundedness of $%
	T_{\alpha ,\beta }$ on $\mathbb{R}^{n}$ is extensively studied. See for
	example \cite{HIR,WAI,STE,FE,SCHO} for $\alpha \in \left( 0,1\right) $ and
	\cite{PERAL} for $\alpha =1$.
	
	The $L^{p}$-boundedness of oscillating multipliers has been studied also in
	various geometric contexts as Riemannian manifolds, Lie groups and symmetric
	spaces. See for example \cite{GIUME,ALEX,MAR,GEORG, LO} and the references
	therein.
	
	In the present work we deal with oscillating multipliers on rank one locally
	symmetric spaces. To state our results, we need to introduce some notation
	(for details see Section 2). Let $G$ be a semi-simple, non-compact,
	connected Lie group with finite center and let $K$ be a maximal compact
	subgroup of $G$. We consider the symmetric space of non-compact type $X=G/K$%
	. Let $G=KAN$ be the Iwasawa decomposition of $G$. If $A\cong \mathbb{R},$
	we say that $X$ has rank one. Recall that rank one symmetric spaces are the
	real, complex, and quaternionic hyperbolic spaces, denoted $\mathbb{H}^{n}(%
	\mathbb{R})$, $\mathbb{H}^{n}(\mathbb{C})$ and $\mathbb{H}^{n}(\mathbb{H})$,
	$n\geq 2$, and the octonionic hyperbolic plane $\mathbb{H}^{2}(\mathbb{O})$.
	Throughout this paper we shall assume that $\dim X=n\geq 2$ and rank$X=1$.
	
	In \cite{GIUME}, Giulini and Meda consider the multiplier
	\begin{equation*}
	\ m_{\alpha,
		\beta}(\lambda)=(\lambda^2+\rho^2)^{-\beta/2}e^{i(\lambda^2+\rho^2)^{
			\alpha/2}}, \alpha>0, \operatorname{Re}{\beta} \geq 0, \lambda >0,
	\end{equation*}
	where $\rho $ is the half sum of positive roots, counted with their
	multiplicities. As in the case of $\mathbb{R}^n$, we denote by $T_{\alpha,
		\beta}$ the convolution operator with kernel $\kappa_{\alpha ,\beta }$, the
	inverse spherical Fourier transform of $m_{\alpha ,\beta}$ in the sense of
	distributions. Then,
	\begin{equation}
	T_{\alpha ,\beta }(f)(x)=\int_{G}\kappa _{\alpha ,\beta
	}(xy^{-1})f(y)dy,\quad f\in C_{0}^{\infty }(X).  \label{operatorX1}
	\end{equation}
	Note that
	\begin{equation}
	T_{\alpha ,\beta }=\Delta _{X}^{-\beta /2}e^{i\Delta _{X}^{\alpha /2}},\
	\alpha >0,\ {\operatorname{Re}}{\beta }\geq 0,  \label{om1}
	\end{equation}
	where $\Delta_{X}$ is the Laplace-Beltrami operator on $X$. In \cite{GIUME},
	the $L^{p}$- boundedness of $T_{\alpha ,\beta }$ is investigated on rank one
	symmetric spaces and the following theorem is proved:
	
	\begin{theorem}[Giulini, Meda]
		\label{GM}If $p\in (1,\infty )$, then
		
		(i) If $\alpha <1$, then $T_{\alpha ,\beta }$ is bounded on $L^{p}(X)$,
		provided that $\operatorname{Re}{\beta }>\alpha n\left\vert 1/p-1/2\right\vert $,
		
		(ii) if $\alpha =1$, then $T_{\alpha ,\beta }$ is bounded on $L^{p}(X)$,
		provided that $\operatorname{Re}{\beta }>\left( n-1\right) \left\vert
		1/p-1/2\right\vert $,
		
		(iii) if $\alpha >1$, then $T_{\alpha ,\beta }$ is bounded on $L^{p}(X)$ if
		and only if $p=2$.
	\end{theorem}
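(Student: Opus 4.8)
The plan is to realize $T_{\alpha,\beta}$ as right convolution on $X=G/K$ by the $K$-bi-invariant kernel $\kappa_{\alpha,\beta}$, whose restriction to the positive chamber $a_t=\exp(tH)$, $t>0$, is given by spherical inversion as $\kappa_{\alpha,\beta}(a_t)=C\int_0^\infty m_{\alpha,\beta}(\lambda)\,\varphi_\lambda(a_t)\,|\mathbf{c}(\lambda)|^{-2}\,d\lambda$, where $\varphi_\lambda$ is the elementary spherical function and $\mathbf{c}$ the Harish--Chandra function. I would use three tools: (a) the spherical Plancherel theorem, reducing $L^2$-boundedness to uniform boundedness of $m_{\alpha,\beta}$ on $\mathbb{R}$; (b) the Kunze--Stein phenomenon in the form of Herz's majorization principle, by which for $p\in(1,2]$ the $L^p\to L^p$ norm of convolution by $\kappa_{\alpha,\beta}$ is dominated by $\int_G|\kappa_{\alpha,\beta}(x)|\,\varphi_{i\delta\rho}(x)\,dx$ with $\delta=2/p-1=2|1/p-1/2|$, the range $p\in[2,\infty)$ following by duality; and (c) the Clerc--Stein necessary condition, that any $L^p$-multiplier with $p\neq 2$ extends to a bounded holomorphic function on the tube $\{\lambda:|\operatorname{Im}\lambda|\le\delta\rho\}$.

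Case (iii) falls out of these structural facts. Since $\operatorname{Re}\beta\ge 0$, $m_{\alpha,\beta}$ is bounded on $\mathbb{R}$, so (a) yields $L^2$-boundedness for every $\alpha>0$. For the converse when $\alpha>1$ I would test (c) on the lower boundary $\lambda=\xi-i\delta\rho$: since $(\lambda^2+\rho^2)^{\alpha/2}\sim\lambda^\alpha$ and $\lambda^\alpha=\xi^\alpha-i\alpha\delta\rho\,\xi^{\alpha-1}+O(\xi^{\alpha-2})$ as $\xi\to+\infty$, one gets $|e^{i(\lambda^2+\rho^2)^{\alpha/2}}|\sim e^{\alpha\delta\rho\,\xi^{\alpha-1}}\to\infty$ because $\alpha>1$, while the prefactor $(\lambda^2+\rho^2)^{-\beta/2}$ decays only polynomially. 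Thus $m_{\alpha,\beta}$ is unbounded on the tube whenever $\delta>0$, and (c) forces $p=2$.

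For the main cases $\alpha\le 1$, $p\neq 2$, everything reduces by (b) to $\int_0^\infty|\kappa_{\alpha,\beta}(a_t)|\,\varphi_{i\delta\rho}(a_t)\,A(t)\,dt<\infty$, where $A(t)\asymp e^{2\rho t}$ is the density of the radial measure and $\varphi_{i\delta\rho}(a_t)\asymp e^{-(1-\delta)\rho t}$ up to polynomial factors; so I must show $|\kappa_{\alpha,\beta}(a_t)|$ decays faster than $e^{-(1+\delta)\rho t}$, uniformly enough to be summed. I would decompose $m_{\alpha,\beta}=\sum_{j\ge0}m_j$ dyadically in $\lambda$ (a single smooth low-frequency piece suffices, since $\rho>0$ makes $m_{\alpha,\beta}$ smooth at the origin, unlike on $\mathbb{R}^n$), with $m_j$ supported in $\lambda\sim 2^j$. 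On that support $(\lambda^2+\rho^2)^{-\beta/2}\sim\lambda^{-\beta}$, $|\mathbf{c}(\lambda)|^{-2}\sim\lambda^{\,n-1}$, and the Harish--Chandra expansion gives $\varphi_\lambda(a_t)\sim\sum_{\pm}\mathbf{c}(\pm\lambda)\,e^{(\pm i\lambda-\rho)t}$ with $|\mathbf{c}(\lambda)|\sim\lambda^{-(n-1)/2}$ in the global region $t\gtrsim 1$. Each piece then becomes, up to $e^{-\rho t}$, the oscillatory integral $\int\lambda^{-\beta+(n-1)/2}e^{i(\lambda^\alpha\pm\lambda t)}\psi(2^{-j}\lambda)\,d\lambda$, treated by stationary phase: the phase $\lambda^\alpha\pm\lambda t$ is stationary (in the $-$ branch, at the matching block) only when $t\sim 2^{j(\alpha-1)}$, and otherwise non-stationary with rapid decay via repeated integration by parts. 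The upshot is that the local region $t\lesssim 1$ reproduces the Euclidean oscillating multiplier and, for $\alpha=1$, the half-wave operator $e^{i\sqrt{\Delta_X}}$, whose sharp thresholds (Miyachi, Fefferman--Stein, Peral) are respectively $\alpha n\,|1/p-1/2|$ and $(n-1)|1/p-1/2|$, while the global region $t\gtrsim 1$ is absorbed by the exponential decay forced by the Kunze--Stein weight. Balancing these gives convergence of the $j$-series precisely at the stated $\beta$-thresholds.

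The main obstacle is this third paragraph: obtaining sharp pointwise kernel bounds for each dyadic block with explicit dependence on $2^j$ and $t$, uniformly across the transition between the local region $t\lesssim 1$ (where one uses the power-series behaviour of $\varphi_\lambda$) and the global region $t\gtrsim 1$ (where the $\mathbf{c}$-function asymptotics and the growing weight $\varphi_{i\delta\rho}$ must be played off against the oscillatory decay). Controlling the error terms $\Gamma_k(\lambda)$ of the Harish--Chandra series uniformly in $\lambda\sim 2^j$, and verifying that equality $\operatorname{Re}\beta=\alpha n\,|1/p-1/2|$ (resp. $(n-1)|1/p-1/2|$) is genuinely excluded, is where the real work lies; the summation over $j$ and the passage from $p\le 2$ to $p\ge 2$ by duality are then routine.
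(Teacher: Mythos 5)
First, a point of reference: the paper does not actually prove Theorem \ref{GM} --- it is imported from \cite{GIUME} as a black box, and what is proved in Section 3 are the locally symmetric analogues. Still, the structure of those proofs shows which tools a correct argument needs, and against that your proposal has the right toolbox (spherical inversion, Plancherel, Herz/Kunze--Stein majorization, Clerc--Stein necessity); in particular your case (iii) is exactly the argument the paper attributes to Giulini--Meda: $m_{\alpha,\beta}$ is unbounded on every strip $S_{\varepsilon}$, so the necessary part of the Clerc--Stein theorem forces $p=2$. The sufficiency part for $\alpha\le 1$, however, has two genuine gaps. The first is the claim that ``everything reduces by (b)'' to finiteness of $\int_0^\infty|\kappa_{\alpha,\beta}(a_t)|\,\varphi_{i\delta\rho}(a_t)\,A(t)\,dt$. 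Near $t=0$ the weight satisfies $\varphi_{i\delta\rho}(a_t)A(t)\asymp t^{n-1}$ and carries no $p$-dependence whatsoever, so this condition just asks that $\kappa_{\alpha,\beta}$ be locally integrable --- which for an oscillating kernel holds only when $\operatorname{Re}\beta>\alpha n/2$, the $p\to 1$ endpoint. A weighted $L^1$ bound on the kernel is blind to cancellation and can never produce the sharp threshold $\operatorname{Re}\beta>\alpha n\left\vert 1/p-1/2\right\vert$. The sharp local statement requires either analytic interpolation between the $L^2$ bound and an endpoint estimate ($H^1\to L^1$ or $L^1\to L^{1,\infty}$), or a genuine local transference theorem to the Euclidean setting --- this latter route is what the paper uses in Proposition \ref{localprop} (define the operator on the group, quote the local result of \cite{GIUME}, then apply Herz's Theorem A \cite{HE2}). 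Your appeal to the Miyachi/Fefferman--Stein/Peral thresholds names the correct answer, but it does not follow from your stated reduction; it is precisely the theorem that has to be supplied, and it contradicts the sentence asserting that everything reduces to the weighted $L^1$ estimate.

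The second gap is in the global region $t\gtrsim 1$. There, stationary phase and integration by parts along the real axis give only $|\kappa^{\infty}_{\alpha,\beta}(a_t)|\lesssim e^{-\rho t}t^{-N}$ (the $e^{-\rho t}$ coming from the Harish--Chandra expansion), whereas the weight is $\varphi_{i\delta\rho}(a_t)A(t)\asymp e^{(1+\delta)\rho t}$; the product $t^{-N}e^{\delta\rho t}$ diverges for every $p\neq 2$. Your phrase ``absorbed by the exponential decay forced by the Kunze--Stein weight'' has the logic backwards: the weight \emph{grows}, and the decay faster than $e^{-(1+\delta)\rho t}$ that you yourself identify as necessary can only be produced by shifting the $\lambda$-contour into the strip $\left\vert \operatorname{Im}\lambda\right\vert<\rho$, where $m_{\alpha,\beta}$ is holomorphic and --- precisely because $\alpha\le 1$ --- still bounded with symbol estimates. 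This contour shift is the heart of the matter: it is why tube-analyticity appears in the class $\mathcal{M}(v,N,\theta)$ of the paper, why the key estimate (\ref{Ij}) evaluates $\partial^{k}m$ at the shifted argument $\lambda+i\rho$, and it is exactly the feature separating $\alpha\le 1$ from your case (iii), where the multiplier fails to be bounded on any strip. Finally, for $\alpha=1$ even the shifted-contour kernel analysis near the light cone is delicate; the proof the paper models itself on avoids it altogether via the subordination formula (\ref{Ta,b}) together with the $L^1$ bounds (\ref{q_sigmaest}) for $q_{\sigma}$ and interpolation --- a cleaner route you may want to adopt rather than dyadic stationary phase.
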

	
	As it is noticed in \cite{GIUME}, the results above for the case $\alpha
	\leq 1$ are less precise than in the Euclidean case, since it is not known
	what happens at the critical indices $\operatorname{Re}{\beta }=\alpha n\left\vert
	1/p-1/2\right\vert $ for $\alpha \in \left( 0,1\right) $ and $\operatorname{Re}{%
		\beta }=\left( n-1\right) \left\vert 1/p-1/2\right\vert $ for $\alpha =1$.
	
	Let us now present the case of locally symmetric spaces. Let $\Gamma $ be a
	discrete and torsion free subgroup of $G$ and let us consider the locally
	symmetric space $M=\Gamma \backslash X=\Gamma \backslash G/K.$ Then $M$,
	equipped with the projection of the canonical Riemannian structure of $X$,
	becomes a Riemannian manifold.
	
	To define oscillating multipliers on $M$, we first observe that if $f\in
	C_{0}^{\infty }(M)$, then, the function $T_{\alpha ,\beta }f$ defined by (%
	\ref{operatorX1}), is right $K$-invariant and left $\Gamma $-invariant. So $%
	T_{\alpha ,\beta }$ can be considered as an operator acting on functions on $%
	M$, which we shall denote by $\widehat{T}_{\alpha ,\beta }$. Note that the
	Laplace-Beltrami operator $\Delta _{M}$ is the projection of $\Delta _{X}$.
	So from (\ref{om1}), it follows that
	\begin{equation}
	\widehat{T}_{\alpha ,\beta }f=\Delta _{M}^{-\beta /2}e^{i\Delta _{M}^{\alpha
			/2}}f,\quad f\in C_{0}^{\infty }(M).  \label{om2}
	\end{equation}%
	In this paper we deal with the $L^{p}$-boundedness of $\widehat{T}_{\alpha
		,\beta }$ and we prove the analogue of Theorem \ref{GM}. We treat the cases $%
	\alpha \in \left( 0,1\right) $, $\alpha =1$ and $\alpha >1$, separately.
	
	\bigskip
	
	\textbf{Case 1}\textit{. }$\alpha \in \left( 0,1\right) $. The main
	ingredient for the proof of our results is the Kunze and Stein phenomenon on
	locally symmetric spaces, proved in \cite{LOMAjga}, and which states that
	there exist $\eta _{\Gamma }\in \mathfrak{a}^{\ast }$ and $s\left(
	p\right)>0 $, $p\in \left( 1,\infty \right) $, such that for every $K$-bi-invariant function $\kappa $, the convolution operator $\ast \left\vert
	\kappa \right\vert $ with kernel $\left\vert \kappa \right\vert $ satisfies
	the estimate
	\begin{equation}
	\left\Vert \ast \left\vert \kappa \right\vert \right\Vert
	_{L^{p}(M)\rightarrow L^{p}(M)}\leq \int_{G}\left\vert \kappa \left(
	g\right) \right\vert \varphi _{-i\eta _{\Gamma }}\left( g\right) ^{s\left(
		p\right) }dg,  \label{ks00}
	\end{equation}
	where $\varphi _{\lambda }$ is the spherical function with index $\lambda $.
	For more details see Section 2.
	
	We say that $M=\Gamma \backslash X$ belongs in the class $(KS)$ if the Kunze
	and Stein phenomenon is valid on it. The class $(KS)$ is described in detail
	in \cite[Section 1]{LOMAjga}. We note that $M\in (KS)$ for all discrete
	groups $\Gamma $, if $X=\mathbb{H}^{n}(\mathbb{H}),\mathbb{H}^{2}(\mathbb{O}%
	) $, while if $X=\mathbb{H}^{n}(\mathbb{R}),\mathbb{H}^{n}(\mathbb{C})$,
	then $M\in (KS)$, provided that $\Gamma $ is amenable \cite{FOMAMA, LOMAjga}.
	
	We have the following theorem:
	
	\begin{theorem}
		\label{alphaless1}If $M$ belongs in the class $(KS)$, then for $\alpha \in
		\left( 0,1\right) $, $\widehat{T}_{\alpha ,\beta }$ is bounded on $L^{p}(M)$%
		, provided that $\operatorname{Re}{\beta >}\alpha n|1/p-1/2|.$
	\end{theorem}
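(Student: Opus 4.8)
The plan is to use the description of $\widehat T_{\alpha,\beta}$ as convolution on $M$ with the $K$-bi-invariant kernel $\kappa_{\alpha,\beta}$, furnished by (\ref{om2}) and (\ref{operatorX1}), and to split the kernel according to the distance from the origin. Choosing a smooth cut-off, I would write $\kappa_{\alpha,\beta}=\kappa^{0}+\kappa^{\infty}$, where $\kappa^{0}$ is supported in a fixed ball $\{r\le 1\}$ (here $r=r(g)\ge 0$ denotes the radial part in the Cartan decomposition $g=k_{1}a_{r}k_{2}$) and $\kappa^{\infty}$ vanishes near the origin, with associated operators $\widehat T^{0}$ and $\widehat T^{\infty}$ on $M$. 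It then suffices to bound each of them on $L^{p}(M)$ under the hypothesis $\operatorname{Re}\beta>\alpha n|1/p-1/2|$.

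For the part at infinity I would apply the Kunze and Stein phenomenon (\ref{ks00}) directly:
\[
\|\widehat T^{\infty}\|_{L^{p}(M)\to L^{p}(M)}\le \int_{G}|\kappa^{\infty}(g)|\,\varphi_{-i\eta_{\Gamma}}(g)^{s(p)}\,dg .
\]
Passing to polar coordinates, the right-hand side equals a one-dimensional integral $\int_{1/2}^{\infty}|\kappa^{\infty}(a_{r})|\,\varphi_{-i\eta_{\Gamma}}(a_{r})^{s(p)}\,\delta(r)\,dr$, where $\delta(r)\asymp e^{2\rho r}$ is the density of the decomposition. The two inputs are the decay of the kernel and the behaviour of the spherical function: from the expansion $\varphi_{\lambda}(a_{r})\sim c(\lambda)e^{(i\lambda-\rho)r}+c(-\lambda)e^{(-i\lambda-\rho)r}$ (with $c$ the Harish--Chandra function) and a stationary-phase analysis of the inversion integral for $0<\alpha<1$ --- the mechanism underlying Theorem \ref{GM} --- one expects $|\kappa^{\infty}(a_{r})|\lesssim e^{-\rho r}(1+r)^{-N}$ with $N$ large, while $\varphi_{-i\eta_{\Gamma}}(a_{r})^{s(p)}\sim e^{s(p)(\eta_{\Gamma}-\rho)r}(1+r)^{\nu}$. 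Collecting the exponential factors, finiteness of the integral reduces to the inequality $\rho+s(p)(\eta_{\Gamma}-\rho)\le 0$, which is precisely what the parameters $\eta_{\Gamma}$ and $s(p)$ provided by \cite{LOMAjga} are meant to secure; here only $\operatorname{Re}\beta>0$ is required.

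For the local part, the Kunze and Stein bound is too lossy: near the origin $\varphi_{-i\eta_{\Gamma}}\approx 1$, so (\ref{ks00}) degenerates into $\|\widehat T^{0}\|_{p\to p}\lesssim\|\kappa^{0}\|_{L^{1}}$, which only yields the $p$-independent threshold $\operatorname{Re}\beta>\alpha n/2$. To reach the sharp, $p$-dependent exponent I would instead exploit that $\widehat T^{0}$ is a genuinely local operator. Subordinating $e^{i\Delta_{M}^{\alpha/2}}$ to the wave propagator $e^{it\sqrt{\Delta_{M}}}$ and invoking finite propagation speed localises $\kappa^{0}$ near the diagonal, where $M$ is isometric to $X$ and, in normal coordinates, comparable to $\mathbb{R}^{n}$. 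The boundedness of $\widehat T^{0}$ for $\operatorname{Re}\beta>\alpha n|1/p-1/2|$ then follows from the Euclidean theory of oscillating multipliers (cf.\ \cite{FE,SCHO} and the local step of \cite{GIUME}), transported to $M$ through a partition of unity adapted to the local isometry $X\to M$.

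Adding the two estimates yields the $L^{p}(M)$-boundedness of $\widehat T_{\alpha,\beta}=\widehat T^{0}+\widehat T^{\infty}$ in the stated range. The step I expect to be the main obstacle is the estimate at infinity: one must extract enough oscillatory decay $e^{-\rho r}(1+r)^{-N}$ from the spherical inversion integral for $0<\alpha<1$, and then verify that this decay survives the competition with the growth $\varphi_{-i\eta_{\Gamma}}^{s(p)}\,\delta$ dictated by the geometry of $M$, i.e.\ that the exponent $\rho+s(p)(\eta_{\Gamma}-\rho)$ is non-positive for the entire range $p\in(1,\infty)$.
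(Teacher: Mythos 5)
Your decomposition of $\kappa_{\alpha,\beta}$ into a local part and a part at infinity, and the use of the Kunze--Stein bound (\ref{ks00}) for the latter, coincide with the paper's strategy (Propositions \ref{localprop} and \ref{infinity}). However, your treatment of the part at infinity has a genuine gap: the kernel estimate $|\kappa^{\infty}(a_{r})|\lesssim e^{-\rho r}(1+r)^{-N}$, which is indeed what non-stationary phase on the \emph{real} contour yields, is not strong enough, and the inequality you hope the Kunze--Stein parameters will secure is false. Since $s(p)=2\min(1/p,1/p')\le 1$ and $|\eta_{\Gamma}|\le\rho$ (recall $\eta_{\Gamma}\in C_{\rho}\cap S(0,(\rho^{2}-\lambda_{0})^{1/2})$), one has
\begin{equation*}
\rho+s(p)\bigl(|\eta_{\Gamma}|-\rho\bigr)=\rho\bigl(1-s(p)\bigr)+s(p)|\eta_{\Gamma}|\;\geq\;0,
\end{equation*}
with strict positivity for every $p\neq 2$ (and also at $p=2$ unless $\eta_{\Gamma}=0$). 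So with your decay the integrand $|\kappa^{\infty}|\,\varphi_{-i\eta_{\Gamma}}^{s(p)}\,\delta(r)$ grows exponentially and no polynomial factor $(1+r)^{-N}$ can rescue the integral. The missing ingredient is analyticity of the multiplier: $m_{\alpha,\beta}$ extends holomorphically to the strip $|\operatorname{Im}\lambda|<\rho$ with symbol bounds (\ref{in2}) uniform in the strip (this is exactly where $\alpha<1$ is used; for $\alpha>1$ the multiplier is unbounded on every such strip). Shifting the contour to height $v\rho$ with $v_{\Gamma}(p)<v<1$, where $v_{\Gamma}(p)=s(p)|\eta_{\Gamma}|/\rho+|2/p-1|<1$ as in (\ref{v(p)}), upgrades the kernel decay to roughly $e^{-(1+v)\rho r}(1+r)^{-N}$, and the total exponent becomes $\rho(1-v)-s(p)(\rho-|\eta_{\Gamma}|)<0$ precisely because $v>v_{\Gamma}(p)$. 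This is the content of Proposition \ref{propos1}, carried out via the multiplier class $\mathcal{M}(v,N,\theta)$ and the shell estimates (\ref{Ij}) imported from \cite{AN,LOMAjga}.

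Your local argument also has a gap, though a reparable one. The claim that ``near the diagonal $M$ is isometric to $X$'' presumes a uniform lower bound on the injectivity radius of $M$, which fails for general discrete torsion-free $\Gamma$ (short closed geodesics, cusps): the kernel of $\widehat{T}^{0}$ on $M$ is $\sum_{\gamma\in\Gamma}\kappa^{0}\bigl((\gamma y)^{-1}x\bigr)$, and arbitrarily many terms may overlap at a given pair $(x,y)$, so a partition of unity adapted to the covering $X\to M$ does not control the sum. (Incidentally, finite propagation speed is a property of $e^{it\sqrt{\Delta}}$, not of $e^{i\Delta^{\alpha/2}}$ with $\alpha<1$; your $\kappa^{0}$ is localized by the cutoff, not by propagation.) The paper avoids this issue by viewing the operator with kernel $\kappa^{0}$ as a convolution operator on the group $G$, deducing its $L^{p}(G)$-boundedness for $\operatorname{Re}\beta>\alpha n|1/p-1/2|$ from the local result of \cite{GIUME}, and then invoking Herz's Theorem A \cite{HE2}, which transfers $L^{p}$-boundedness of such convolution operators from $G$ to $\Gamma\backslash G$ for an \emph{arbitrary} discrete subgroup $\Gamma$; replacing your partition-of-unity step by this transference principle would repair the local part.
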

	
	\textbf{Case 2}\textit{. }$\alpha =1$. This case is of particular interest
	since by (\ref{om2}), $\widehat{T}_{1,\beta }=\Delta _{M}^{-\beta
		/2}e^{i\Delta _{M}^{1/2}}$ and thus $\widehat{T}_{1,\beta }$ is related to
	the wave operator.
	
	The $L^{p}$-boundedness of the operator $T_{1,\beta }$ is investigated in
	\cite{PERAL} for the case of $\mathbb{R}^{n}$, in \cite{LO} in a very
	general geometric context including Riemannian manifolds of bounded geometry
	and Lie groups of polynomial or exponential growth, and in \cite{GIUME} for
	rank one symmetric spaces.
	
	Denote by $\delta (\Gamma )$ the critical exponent of the group $\Gamma $:
	\begin{equation}
	\delta (\Gamma )=\inf \left\{ s>0:P_{s}(x,y)<+\infty \right\} ,
	\label{criticalexp}
	\end{equation}%
	where
	\begin{equation}
	P_{s}(x,y)=\sum_{\gamma \in \Gamma }e^{-sd(x,\gamma y)}  \label{poinc}
	\end{equation}%
	is the Poincar\'{e} series. Note that $\delta (\Gamma )\in \lbrack 0,2\rho ]$%
	.
	
	We say that $\Gamma $ is of convergence type according as the series $%
	P_{\delta (\Gamma )}(x,y)$ converges and of divergence type otherwise.
	
	We say that $\Gamma $ belongs in the class $(CT)$ if $\delta (\Gamma )=2\rho
	$ and $\Gamma $ is of convergence type. For example, if $\Gamma \subset SO(n,1)$, 
	then $\Gamma \in (CT)$ if it is of the second kind, i.e. the limit set
	$\Lambda (\Gamma )$ is not equal to the whole of 
	$\partial \mathbb{H}^{n}(\mathbb{R})$, \cite[Theorem 1.6.2]{NI}
	
	In \cite{RO}, Roblin gives
	a criterion of convergence for the case of rank one locally symmetric
	spaces. Recall that a limit point $\xi$ belong in the conical limit set $\Lambda _{c}\left( \Gamma \right) $
	if there is some geodesic ray $\beta_{t } $ tending to $\xi $  as $t\rightarrow \infty $
	and a constant $c>0$ such that the orbit $\Gamma x_{0}$, $x_{0}\in X$, accumulates to $\xi $
	within the closed $c$-neighborhood of $\beta_{t} $. In \cite[Theorem 1.7]{RO}
	it is proved that $\Gamma$ is of convergence type  iff $%
	\mu _{y_{0}}\left( \Lambda _{c}\left( \Gamma \right) \right) =0$, where
	$\mu _{y_{0}}$, $y_{0} \in X$ is a $\Gamma$-invariant Patterson-Sullivan density.
	For more details for the class $(CT)$, see Section 2.
	
	\begin{theorem}
		\label{alphaeq1} If either $\delta (\Gamma )<2\rho $ or $\Gamma \in (CT)$,
		then $\widehat{T}_{1,\beta }$ is bounded on $L^{p}(M)$, $p\in (1,\infty )$,
		provided that $\operatorname{Re}{\beta >}(n-1)|1/p-1/2|$.
	\end{theorem}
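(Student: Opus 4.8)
The plan is to run the classical Miyachi--Peral interpolation scheme, transplanted from $X$ to $M$, with a Poincar\'e-series estimate playing the role that the Kunze--Stein bound \eqref{ks00} plays in Theorem~\ref{alphaless1}; note that \eqref{ks00} is \emph{not} available here, since we do not assume $M\in(KS)$. The first remark is that the two hypotheses amount to a single convergence statement: if $\delta(\Gamma)<2\rho$ then $P_{2\rho}(x,y)<\infty$ because $2\rho>\delta(\Gamma)$, while if $\Gamma\in(CT)$ then by definition $P_{\delta(\Gamma)}=P_{2\rho}$ converges. Thus in either case the Poincar\'e series converges at the critical exponent,
\begin{equation*}
\sum_{\gamma\in\Gamma}e^{-2\rho\,d(x,\gamma y)}<\infty\quad\text{locally uniformly in }x,y,
\end{equation*}
and this is the only feature of $\Gamma$ I would use. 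By duality (using $\widehat T_{1,\beta}^{*}=\widehat T_{1,\overline\beta}$ up to a harmless sign in the oscillation) it suffices to treat $1<p\le 2$.

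The $L^2$ endpoint is free. By \eqref{om2} we have $\widehat T_{1,\beta}=m_{1,\beta}(\Delta_M)$, and for $\operatorname{Re}\beta=0$ the multiplier $m_{1,it}$ has modulus one on $[\,0,\infty)$; hence the spectral theorem on $M$ gives $\|\widehat T_{1,it}\|_{L^2(M)\to L^2(M)}\le 1$, with no condition on $\Gamma$. To move off $L^2$ I would decompose the operator by the finite speed of propagation of the wave equation: writing $m_{1,\beta}=m(\sqrt{\Delta_M})$ and using the even representation $m(\sqrt{\Delta_M})=\tfrac1{2\pi}\int_{\mathbb R}\widehat m(t)\cos(t\sqrt{\Delta_M})\,dt$, I would split the wave time into $|t|\le 1$ and $|t|>1$. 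Since $\cos(t\sqrt{\Delta_M})$ has kernel supported in $\{d(x,y)\le|t|\}$, the first piece $\widehat T^{0}$ has kernel supported in $\{d\le 1\}$ (the local part), while the tail $\widehat T^{\infty}$ collects the large wave times (the global part).

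For the local part I would invoke the bounded geometry of $M$ (it is locally isometric to the fixed $X$): up to the periodisation over $\Gamma$, $\widehat T^{0}$ is built from the corresponding operator on $X$, which by the Fourier-integral-operator estimates of Miyachi and Peral, as used in \cite{GIUME,LO}, is bounded on $L^p$ \emph{precisely} for $\operatorname{Re}\beta>(n-1)|1/p-1/2|$; this is where the stated threshold originates. The global part, by contrast, I expect to be bounded on $L^p(M)$ for all $p\in(1,\infty)$ with no restriction on $\beta$ beyond $\operatorname{Re}\beta\ge 0$ (for $\operatorname{Re}\beta>0$ the tail of $\widehat m$ beyond $|t|=1$ is integrable, coming only from the $\Delta^{-\beta/2}$ smoothing), and here the Poincar\'e series enters: the kernel $\kappa^{\infty}$ on $X$ carries the universal spherical decay $e^{-\rho d}$, so $\kappa^{\infty}\notin L^1(X)$ and the periodised kernel $\sum_\gamma\kappa^{\infty}(x^{-1}\gamma y)$ on $M$ must be controlled using the convergence of $\sum_\gamma e^{-2\rho d(x,\gamma y)}$.

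The pieces are then assembled by Stein's analytic interpolation in $\beta$, between the $L^2$ bound at $\operatorname{Re}\beta=0$ and an endpoint estimate near $\operatorname{Re}\beta=(n-1)/2$ (of $L^1$--$L^{1,\infty}$ or Hardy-space type) furnished by the local analysis, adding the global part which is bounded throughout; this yields boundedness of $\widehat T_{1,\beta}$ on $L^p(M)$ for $\operatorname{Re}\beta>(n-1)|1/p-1/2|$. The step I expect to be genuinely delicate, and the real heart of the proof, is the global estimate. The difficulty is to extract the critical exponent $2\rho$ --- that is, \emph{two} factors of $e^{-\rho d}$ --- from a kernel that decays only like $e^{-\rho d}$; this forces one to exploit the oscillation (or an $L^2$-duality / $TT^{*}$ formulation) rather than the absolute value of the kernel, since the weighted estimate with the ground spherical function $\varphi_0\asymp(1+d)e^{-\rho d}$ alone would only converge for $\delta(\Gamma)<\rho$. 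The convergence-type borderline case $\Gamma\in(CT)$ is the most sensitive point, as one must there absorb the polynomial corrections to $\varphi_0$ against a series that converges only barely, at the exact exponent $2\rho$.
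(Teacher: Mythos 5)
Your reduction of the two hypotheses to the single statement $P_{2\rho}(x,y)<\infty$ is correct, and it is exactly how the hypothesis on $\Gamma$ enters the paper's proof. The gap is in the step you yourself call ``the real heart of the proof'': the global estimate is never proved, and the reason you believe it to be delicate rests on a false premise about kernel decay. You assert that the kernel away from the origin carries only ``the universal spherical decay $e^{-\rho d}$'', so that absolute-value estimates could reach only $\delta(\Gamma)<\rho$ and one would be forced to exploit oscillation or a $TT^{*}$ argument. In fact the kernels relevant here decay at \emph{twice} that rate: the multipliers $e^{(i-\sigma)(\lambda^{2}+\rho^{2})^{1/2}}$ (and $m_{1,\beta}$ itself) are holomorphic and bounded on the whole strip $|\operatorname{Im}\lambda|<\rho$, with only branch-point singularities at $\pm i\rho$, so shifting the contour in the inverse spherical transform produces a factor $e^{-\rho t}$ \emph{in addition to} the $e^{-\rho t}$ coming from the Harish-Chandra expansion of $\varphi_{\lambda}$. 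This is precisely the Giulini--Meda estimate \eqref{q1} quoted in the paper, $|q_{\sigma}(\exp tH_{0})|\leq c_{\sigma}(1+t)^{-3/2}e^{-2\rho t}$, and with it the periodized kernel $\sum_{\gamma}|q_{\sigma}(x,\gamma y)|$ is finite by the plain triangle inequality as soon as $P_{2\rho}(x,y)<\infty$ (Proposition \ref{forallgroups}); no cancellation, no $TT^{*}$ formulation, and no delicate absorption at the borderline exponent is needed. Since your proposal explicitly rejects this route as impossible and offers no substitute argument, the crucial step of your proof is missing.

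It is also worth recording how the paper actually proceeds, because its structure avoids your other fragile step, namely the final assembly by Stein analytic interpolation against an $L^{1}$-type or Hardy-space endpoint: on an exponentially growing, possibly infinite-volume $M$ there is no off-the-shelf Calder\'on--Zygmund or $H^{1}$ endpoint theory to invoke. Instead of a wave-time (finite propagation speed) decomposition, the paper uses the subordination formula \eqref{Ta,b}, $\widehat{T}_{1,\beta}=\Gamma(\beta)^{-1}\int_{0}^{\infty}\sigma^{\beta-1}\widehat{T}_{q_{\sigma}}\,d\sigma$, and bounds each $\widehat{T}_{q_{\sigma}}$ separately: on $L^{\infty}(M)$ by unfolding via Weyl's formula, $\|\widehat{T}_{q_{\sigma}}\|_{L^{\infty}(M)\to L^{\infty}(M)}\leq\|q_{\sigma}\|_{L^{1}(X)}$ (this is where the convergence of the periodized kernel, hence the hypothesis on $\Gamma$, is used), and on $L^{2}(M)$ by the spectral theorem; then Riesz--Thorin interpolation, duality, and integration in $\sigma$. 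The threshold $\operatorname{Re}\beta>(n-1)|1/p-1/2|$ comes out of Giulini--Meda's bound \eqref{q_sigmaest}, $\|q_{\sigma}\|_{L^{1}(X)}\leq c\,\sigma^{(1-n)/2}$ as $\sigma\to 0$, not from a transplanted Miyachi--Peral local estimate. So while parts of your skeleton ($L^{2}$ spectral bound, duality, Poincar\'e series at exponent $2\rho$) are sound, the proposal does not prove the theorem, and the mechanism it proposes for the decisive estimate points away from the simple argument that actually works.
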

	
	\textbf{Case 3}\textit{. }$\alpha >1$. As it is shown in Section 3,
	\begin{equation*}
	\widehat{T}_{\alpha ,\beta }=\frac{1}{\Gamma \left( \beta /\alpha \right) }%
	\int_{0}^{\infty }\sigma ^{\left( \beta /\alpha \right) -1}e^{\left(
		i-\sigma \right) \Delta _{M}^{\alpha /2}}d\sigma .
	\end{equation*}%
	But, by the spectral theorem
	\begin{equation*}
	\left\Vert e^{\left( i-\sigma \right) \Delta _{M}^{\alpha /2}}\right\Vert
	_{L^{2}(M)\rightarrow L^{2}(M)}\leq e^{-\sigma \rho ^{\alpha }}.
	\end{equation*}%
	So,
	\begin{equation*}
	\left\Vert \widehat{T}_{\alpha ,\beta }\right\Vert _{L^{2}(M)\rightarrow
		L^{2}(M)}\leq c,
	\end{equation*}%
	and this is the only we can say for the $L^{p}$-boundedness of $\widehat{T}%
	_{\alpha ,\beta }$ if $\alpha >1$. On the contrary, in \cite{GIUME} Giulini
	and Meda observe that the multiplier $m_{\alpha ,\beta }$ is not bounded on
	any strip $S_{\varepsilon }=\left\{ |\operatorname{Im}\lambda |<\varepsilon \rho
	\right\} $, $\varepsilon \leq 1$, and consequently by the necessary part of
	the multiplier theorem of Clerc and Stein \cite{CLERC}, they conclude that $%
	T_{\alpha ,\beta }$ is bounded on $L^{p}(X)$ \textit{iff} $p=2$.
	
	The paper is organized as follows. In Section 2 we present the necessary
	tools we need for our proofs, and in Section 3 we give the proofs of our
	results.
	
	\section{Preliminaries}
	
	In this section we recall some basic facts about symmetric spaces and
	locally symmetric spaces we will use for the proof of our results. For
	details see \cite{AN, HEL, FOMAMA, LOMAjga}.
	
	Let $G$ be a semisimple Lie group, connected, noncompact, with finite center
	and let $K$ be a maximal compact subgroup of $G$ and consider the symmetric
	space $X=G/K$. In the sequel we assume that $\dim X=n$. Denote by $\mathfrak{%
		g}$ and $\mathfrak{k}$ the Lie algebras of $G$ and $K$. Let also $\mathfrak{p%
	}$ be the subspace of $\mathfrak{g}$ which is orthogonal to $\mathfrak{k}$
	with respect to the Killing form. The Killing form induces a $K$-invariant
	scalar product on $\mathfrak{p}$ and hence a $G$-invariant metric on $G/K$.
	Denote by $\Delta _{X}$ the Laplace-Beltrami operator on $X$, by $d(.,.)$
	the Riemannian distance and by $dx$ the associated measure on $X$. Let $%
	\Gamma $ be a discrete torsion free subgroup of $G$. Then, the locally
	symmetric space $M=\Gamma \backslash X$, equipped with the projection of the
	canonical Riemannian structure of $X$, becomes a Riemannian manifold. We
	denote by $\Delta _{M}$ the Laplacian on $M$.
	
	Fix $\mathfrak{a}$ a maximal abelian subspace of $\mathfrak{p}$ and denote
	by $\mathfrak{a}^{\ast }$ the real dual of $\mathfrak{a}$. Let $A$ be the
	analytic subgroup of $G$ with Lie algebra $\mathfrak{a}.$ Let $\mathfrak{a}
	^{+}\subset \mathfrak{a}$ and let $\overline{\mathfrak{a}^{+}}$ be its
	closure. Put $A^{+}=\exp \mathfrak{a}^{+}$. Its closure in $G$ is $\overline{
		A^{+}}=\exp \overline{\mathfrak{a}^{+}}$. We have the Cartan decomposition
	\begin{equation*}
	\ G=K(\overline{A^{+}})K=K(\exp \overline{\mathfrak{a}^{+}})K.
	\end{equation*}
	We say that $a$ is a root vector if for every $H\in \mathfrak{a}$
	\begin{equation*}
	\ [H,X]=a(H)X,\;X\in \mathfrak{g}.
	\end{equation*}
	Denote by $\rho $ the half sum of positive roots, counted with their
	multiplicities. Denote by $x_{0}=eK$ the origin of $X$. If $x,y\in X$, then
	there are isometries $g,h\in G$ such that $x=gx_{0}$ and $y=hx_{0}$. Then,
	\begin{equation}
	d(x,y)=d(gx_{0},hx_{0})=d(x_{0},g^{-1}hx_{0}).  \label{d(x,y)}
	\end{equation}
	But, by the Cartan decomposition
	\begin{equation}
	g^{-1}h=k\left( \exp H(g^{-1}h)\right) k^{\prime },\quad k,k^{\prime }\in
	K,\;H(g^{-1}h)\in \overline{\mathfrak{a}_{+}}.  \label{kak}
	\end{equation}
	In the rank one case, we have that
	\begin{equation*}
	\ A^{+}=\left\{ \exp H:H\in \overline{\mathfrak{a}_{+}}\right\} =\left\{
	\exp tH_{0}:t>0\right\} ,
	\end{equation*}
	where $H_{0}\in \overline{\mathfrak{a}_{+}}$ with $\Vert H_{0}\Vert =1$.
	From (\ref{d(x,y)}) and (\ref{kak}) it follows that
	\begin{equation}
	d(x,y)=\Vert H\Vert =t\Vert H_{0}\Vert =t.  \label{tgamma}
	\end{equation}
	
	\subsection{The spherical Fourier transform and the Kunze and Stein
		phenomenon}
	
	Denote by $S(K\backslash G/K)$ the Schwartz space of $K$-bi-invariant
	functions on $G$. The spherical Fourier transform $\mathcal{H}$ is defined
	by
	\begin{equation*}
	\ \mathcal{H}f(\lambda )=\int_{G}f(x)\varphi _{\lambda }(x)\;dx,\quad
	\lambda \in \mathfrak{a^{\ast }},\quad f\in S(K\backslash G/K),
	\end{equation*}
	where $\varphi _{\lambda }$ are the elementary spherical functions on $G$.
	
	Let $S(\mathfrak{a^{\ast }})$ be the usual Schwartz space on $\mathfrak{\
		a^{\ast }}$, and let us denote by $S(\mathfrak{a^{\ast }})^{W}$ the subspace
	of $W$-invariants in $S(\mathfrak{a^{\ast }})$, where $W$ is the Weyl group
	associated to the root system of $(\mathfrak{g},\mathfrak{a})$. Then, by a
	celebrated theorem of Harish-Chandra, $\mathcal{H}$ is an isomorphism
	between $S(K\backslash G/K)$ and $S(\mathfrak{a^{\ast }})$ and its inverse
	is given by
	\begin{equation*}
	\ (\mathcal{H}^{-1}f)(x)=c\int_{\mathfrak{a\ast }}f(\lambda )\varphi
	_{-\lambda }(x)\frac{d\lambda }{|\mathbf{c}(\lambda )|^{2}},\quad x\in
	G,\quad f\in S(\mathfrak{a^{\ast }})^{W},
	\end{equation*}%
	where $\mathbf{c}(\lambda )$ is the Harish-Chandra function.
	
	In \cite{LOMAjga} it is proved the following analogue \cite{CO} of Kunze and
	Stein phenomenon, for convolution operators on a class of locally symmetric
	spaces. Let $C_{\rho }$ be the convex body in $\mathfrak{a^{\ast }}$
	generated by the vectors $\left\{ w\rho ;\;w\in W\right\} .$ Let also $%
	\lambda _{0}$ be the bottom of the $L^{2}$-spectrum of $\Delta _{M}$. Then
	there exists a vector $\eta _{\Gamma }\in C_{\rho }\cap S(0,(\rho
	^{2}-\lambda _{0})^{1/2})$, where $S(0,r)$ is the Euclidean sphere of $%
	\mathfrak{a^{\ast }}$, such that for all $p\in (1,\infty )$ and for every $K$-bi-invariant function $\kappa $, the convolution operator $\ast |\kappa |$
	with kernel $|\kappa |$ satisfies the estimate
	\begin{equation}
	\Vert \ast |\kappa |\Vert _{L^{p}(M)\rightarrow L^{p}(M)}\leq
	\int_{G}|\kappa (g)|\varphi _{-i\eta _{\Gamma }}(g)^{s(p)}dg,  \label{Kunze}
	\end{equation}%
	where
	\begin{equation*}
	\ s(p)=2\min ((1/p),(1/p^{\prime })),
	\end{equation*}%
	and $p^{\prime }$ is the conjugate of $p$. For more details, see \cite%
	{FOMAMA, LOMAjga}.
	
	\subsection{The class $(CT)$}
	
	In \cite{RO}, Roblin investigates the question of convergence or divergence
	type of a discrete group $\Gamma $ of isometries of $CAT(-1)$ spaces. Note
	that $CAT(-1)$ spaces contain all rank one symmetric spaces. To state the
	results of Roblin, we need to introduce some notation.
	
	Denote by $\mu _{x}$, $x\in X$, a $\Gamma $-invariant Patterson-Sullivan
	density: that is a family of finite and mutually absolutely continuous
	measures on $\partial X$ satisfying the following conditions:
	
	(i) for any $x,y\in X$,
	\begin{equation*}
	\frac{d\mu _{y}}{d\mu _{x}}\left( \xi \right) =e^{-\delta \left( \Gamma
		\right) \beta _{\xi }\left( x,y\right) },\text{ }\xi \in \partial X.
	\end{equation*}%
	Here $\beta _{\xi }\left( x,y\right) $ is the Busemann function:
	\begin{equation*}
	\beta _{\xi }\left( x,y\right) =\lim_{t\rightarrow \infty }\left( d\left(
	\xi _{t},x\right) -d\left( \xi _{t},y\right) \right) ,
	\end{equation*}%
	where $\xi _{t}$ is a geodesic ray tending to $\xi $ as $t\rightarrow \infty
	$.
	
	(ii) for any $\gamma \in \Gamma $ and $x\in X$, $\gamma ^{\ast }\mu _{x}=\mu
	_{\gamma x}$.
	
	Note that for any $x\in X$, $\mu _{x}$ is supported on the limit set $%
	\Lambda \left( \Gamma \right) $. Note also that the celebrated Patterson-Sullivan
	construction insures the existence of such conformal densities
	in various geometric contexts as
	Hadamard manifolds and $CAT\left( -1\right)$ spaces, \cite{COO,YUE}.

	If $\mu _{x_{0}}$ is normalized to be a probability measure, then in \cite[%
	Theorem 1.7]{RO}, Roblin proves that $\Gamma $ is of convergence type iff $%
	\mu _{x_{0}}\left( \Lambda _{c}\left( \Gamma \right) \right) =0$.
	
	As far as it concerns divergence type groups, the question is investigated
	in \cite{SU,COR} for rank one symmetric spaces and in \cite{DAL,LEU} for
	rank greater than $2$. More precisely in \cite[Proposition 2]{SU} Sullivan
	treats the case of the real hyperbolic space and shows that $\Gamma\subset SO(n,1)$ is of
	divergence type if it is geometrically finite and convex co-compact, i.e. $%
	\Gamma \backslash C\left( \Lambda \left( \Gamma \right) \right) $ is
	compact, where $C\left( \Lambda \left( \Gamma \right) \right) $ is the
	convex hull of the limit set of $\Gamma $. In \cite[Proposition 3.7]{COR}
	Corlette and Iozzi treat the case of all rank one symmetric spaces and
	improve the result of \cite{SU} in proving that if $\Gamma $ is
	geometrically finite subgroup of isometries of a rank one symmetric space,
	then $\Gamma $ is of divergence type.
	
	In \cite{LEU} Leuzinger investigates the case when $G$ possesses Kazhdan's
	property $(T)$, i.e. when $G$ has no simple factors locally isomorphic to $%
	SO(n,1)$ or $SU(n,1)$, \cite[Ch. 2]{HAVA}. For example, the rank one
	symmetric spaces which possess property $(T)$ are $H^{n}\left( \mathbb{H}%
	\right) =Sp(n,1)/Sp(n)$, and $H^{2}\left( \mathbb{O}\right)
	=F_{4}^{-20}/Spin\left( 9\right) $. In \cite[Main Theorem]{LEU} it is proved
	that if $G$ is as above, then the following are equivalent:
	
	(i) $M$ is a lattice, i.e. Vol($M$)$<\infty $,
	
	(ii) $\delta (\Gamma )=2\rho $,
	
	(iii) $\Gamma $ is of divergence type.
	
	It follows then that if $\Gamma\in Sp\left(n,1\right)$ (resp. $\Gamma\in F_{4}^{20})$
	and $\Gamma \backslash \mathbb{H}^{n}(\mathbb{H%
	})$  (resp. $\Gamma \backslash \mathbb{H}^{2}(\mathbb{O})$)  is a lattice, then
	$\Gamma$ is of divergence type.
	So, only discrete subgroups $\Gamma $ of $SO(n,1)$ or $SU(n,1)$ with $\delta (\Gamma )=2\rho $
	can possibly be in the class $\left(CT\right)$.
	
	\section{Proof of the results}
	
	\subsection{Proof of Theorem \protect\ref{alphaless1}}
	
	We start by performing a decomposition of the kernel $\kappa _{\alpha ,\beta
	}=\mathcal{H}^{-1}(m_{\alpha ,\beta }).$ We write
	\begin{equation*}
	\ \kappa _{\alpha ,\beta }=\kappa _{\alpha ,\beta }^{0}+\kappa _{\alpha
		,\beta }^{\infty },
	\end{equation*}%
	with $\kappa _{\alpha ,\beta }^{0},\kappa _{\alpha ,\beta }^{\infty }$, $K$%
	-bi-invariant functions that satisfy $supp(\kappa _{\alpha ,\beta
	}^{0})\subset B(0,2)$ and $supp(k_{\alpha ,\beta }^{\infty })\subset
	B(0,1)^{c}.$ Denote by $\widehat{T}_{\alpha ,\beta }^{0}$ and $\widehat{T}%
	_{\alpha ,\beta }^{\infty }$ the corresponding convolution operators with
	kernels $\kappa _{\alpha ,\beta }^{0}$ and $\kappa _{\alpha ,\beta }^{\infty
	}$.
	
	The operator $\widehat{T}_{\alpha ,\beta }^{0}$ is the \textquotedblleft
	local\textquotedblright\ part of $\widehat{T}_{\alpha ,\beta }$ and has the
	same behavior as its analogue in the Euclidean context.
	
	\begin{proposition}
		\label{localprop} \label{local part}For $\alpha \in \left( 0,1\right) $, $%
		\widehat{T}_{\alpha ,\beta }^{0}$ is bounded on $L^{p}(M)$, provided that $%
		\operatorname{Re}{\beta >}\alpha n|1/p-1/2|$.
	\end{proposition}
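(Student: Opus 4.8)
The plan is to reduce the $L^p$-boundedness of the local operator $\widehat{T}^{0}_{\alpha,\beta}$ to a Euclidean-type estimate by exploiting the fact that the kernel $\kappa^0_{\alpha,\beta}$ is compactly supported in $B(0,2)$, so that only the behavior of the manifold at small scales matters. Since $M=\Gamma\backslash X$ is a complete Riemannian manifold whose local geometry is identical to that of $X$ (and hence to that of rank one hyperbolic space), near the diagonal the kernel behaves exactly as the corresponding Euclidean kernel of $m_{\alpha,\beta}$ does on $\mathbb{R}^n$. The strategy therefore is: first, analyze the kernel $\kappa^0_{\alpha,\beta}$ on $X$ via the inverse spherical Fourier transform and establish the pointwise and integrability estimates that identify $\widehat{T}^0_{\alpha,\beta}$ as a convolution operator of Euclidean type; second, invoke the known $L^p$ theory for oscillating multipliers of the form $m_{\alpha,\beta}$ on $\mathbb{R}^n$ (the references \cite{HIR,WAI,STE,FE,SCHO}).

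First I would study the structure of $\kappa^0_{\alpha,\beta}$ near the origin. Because $m_{\alpha,\beta}(\lambda)=(\lambda^2+\rho^2)^{-\beta/2}e^{i(\lambda^2+\rho^2)^{\alpha/2}}$ is, up to smooth corrections, a symbol of the same order and oscillation as the flat multiplier $\|\xi\|^{-\beta}e^{i\|\xi\|^\alpha}$, and because the spherical functions $\varphi_\lambda$ agree to leading order with the Euclidean Bessel-type kernels at short range, the singularity of $\kappa^0_{\alpha,\beta}$ at $x_0$ has precisely the Euclidean profile. Concretely, one expects $\kappa^0_{\alpha,\beta}(x)$ to exhibit the same local singularity structure as the convolution kernel of the Euclidean operator $T_{\alpha,\beta}$, which for $\alpha\in(0,1)$ is known to define a bounded operator on $L^p(\mathbb{R}^n)$ exactly when $\operatorname{Re}\beta>\alpha n|1/p-1/2|$. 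The point is that the $L^p$-boundedness of a compactly supported convolution operator on a manifold of bounded geometry is governed entirely by the local singularity of its kernel, so matching the Euclidean profile yields the same threshold $\alpha n|1/p-1/2|$.

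The passage from $X$ to $M=\Gamma\backslash X$ is then handled by the Kunze and Stein estimate \eqref{Kunze}: since $\kappa^0_{\alpha,\beta}$ is $K$-bi-invariant, the operator norm of $\ast|\kappa^0_{\alpha,\beta}|$ on $L^p(M)$ is dominated by $\int_G|\kappa^0_{\alpha,\beta}(g)|\,\varphi_{-i\eta_\Gamma}(g)^{s(p)}\,dg$. On the support $B(0,2)$ the spherical function $\varphi_{-i\eta_\Gamma}$ is bounded (it is continuous and equal to $1$ at the origin), so this integral is comparable to $\int_{B(0,2)}|\kappa^0_{\alpha,\beta}(g)|\,dg$ together with the finer $L^p$-mapping information; in any case the local, compactly supported nature of the kernel means the $\Gamma$-averaging contributes only a bounded factor and cannot worsen the Euclidean threshold. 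Thus the boundedness on $M$ follows from the boundedness on $X$ under the same condition $\operatorname{Re}\beta>\alpha n|1/p-1/2|$.

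The main obstacle I anticipate is the precise kernel analysis in the first step: one must show rigorously that the local part $\kappa^0_{\alpha,\beta}$ of the inverse spherical transform has the same singularity as the Euclidean kernel, controlling the error terms coming from the difference between $\varphi_\lambda$ and its Euclidean counterpart, and from the discrepancy between the symbol $(\lambda^2+\rho^2)^{-\beta/2}e^{i(\lambda^2+\rho^2)^{\alpha/2}}$ and $\|\lambda\|^{-\beta}e^{i\|\lambda\|^\alpha}$. This typically requires a stationary-phase or dyadic decomposition of $m_{\alpha,\beta}$ in $\lambda$, estimating each dyadic piece of the kernel, and checking that the smooth, lower-order corrections contribute operators that are bounded on every $L^p$ for $p\in(1,\infty)$ and hence do not affect the critical index. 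Once these kernel estimates are in place, the reduction to the Euclidean result and the application of \eqref{Kunze} are routine.
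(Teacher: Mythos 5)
Your first step (matching the local singularity of $\kappa^{0}_{\alpha,\beta}$ with that of the Euclidean kernel) is in outline the content of the local result of \cite{GIUME}, which the paper simply cites, so that part is acceptable as a sketch. The genuine gap is in your second step, the passage from $X$ to $M$. You propose to transfer via the Kunze--Stein estimate (\ref{Kunze}), and that cannot work for a structural reason: (\ref{Kunze}) is an estimate for the operator $\ast|\kappa|$, i.e.\ it only sees the absolute value of the kernel, and for the local part it reduces to
\begin{equation*}
\Vert \widehat{T}^{0}_{\alpha,\beta}\Vert_{L^{p}(M)\rightarrow L^{p}(M)}\leq c\int_{B(0,2)}\left\vert \kappa^{0}_{\alpha,\beta}(g)\right\vert dg ,
\end{equation*}
since $\varphi_{-i\eta_{\Gamma}}^{s(p)}$ is bounded above and below on the compact set $B(0,2)$. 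But for $\alpha\in(0,1)$ the kernel $\kappa^{0}_{\alpha,\beta}$ has the singularity $|x|^{-n+(\operatorname{Re}\beta-\alpha n/2)/(1-\alpha)}$ at the origin, which is locally integrable only when $\operatorname{Re}\beta>\alpha n/2$. Hence your argument can at best reach the range $\operatorname{Re}\beta>\alpha n/2$, and it misses the entire range $\alpha n|1/p-1/2|<\operatorname{Re}\beta\leq\alpha n/2$, which is precisely where the proposition has content for $p\neq 1,\infty$. No estimate that discards the oscillation of the kernel can produce the threshold $\alpha n|1/p-1/2|$: Kunze--Stein is the right tool for the part at infinity (where the kernel decays and absolute convergence holds, as in Proposition \ref{propos1}) and exactly the wrong tool for the local part, where boundedness relies on cancellation.

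The paper avoids this by a transference argument that preserves cancellation: since $\kappa^{0}_{\alpha,\beta}$ is compactly supported, $\widehat{T}^{0}_{\alpha,\beta}$ may be viewed as a convolution operator on the group $G$; the local result of \cite{GIUME} gives its boundedness on $L^{p}(G)$ for $\operatorname{Re}\beta>\alpha n|1/p-1/2|$, and Herz's Theorem A \cite{HE2}, as used in \cite[Proposition 13]{LOMAanna}, transfers the $L^{p}(G)$ bound of the convolution operator itself --- not of its absolute value --- down to $L^{p}(M)$. Note also that this route needs no hypothesis on $\Gamma$: as the paper remarks immediately after the proposition, the local statement holds for all discrete torsion free subgroups, whereas your route would needlessly require $M\in(KS)$. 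To repair your argument, replace the Kunze--Stein step by Herz's transference theorem (or an equivalent transference principle for compactly supported convolution kernels on quotients).
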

	
	\begin{proof}
		The continuity of $\widehat{T}_{\alpha ,\beta }^{0}$ on $L^p(M)$ follows
		from \cite[Proposition 13]{LOMAanna}. Indeed, observe first that $\widehat{T}%
		_{\alpha ,\beta }^{0}$ can be defined as an operator on the group $G$, and
		then, apply the local result of \cite{GIUME}, to conclude its boundedness on
		$L^{p}(G)$, for all $p\in (1,\infty )$ such that $\operatorname{Re} {\beta >}\alpha
		n|1/p-1/2|$. The continuity of $\widehat{T}_{\alpha ,\beta }^{0}$ on $%
		L^{p}(M)$, follows by applying Herz's Theorem A, \cite{HE2}.
	\end{proof}
	
	Note that Proposition \ref{localprop} is valid for all discrete and torsion
	free subgroups of $G$.
	
	To finish the proof of Theorem \ref{alphaless1} it remains to prove the $%
	L^{p}$ boundedness of $\widehat{T}_{\alpha ,\beta }^{\infty }$. Here we
	shall need the assumption that the Kunze and Stein phenomenon is valid on $M$%
	.
	
	\begin{proposition}
		\label{infinity}If $M$ belongs in the class $(KS)$, and $\alpha \in \left(
		0,1\right) $, then $\widehat{T}_{\alpha ,\beta }^{\infty }$ is bounded on $%
		L^{p}(M)$ for all $p\in \left( 1,\infty \right) $.
	\end{proposition}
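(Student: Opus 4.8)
The plan is to estimate the $L^p(M)\to L^p(M)$ operator norm of $\widehat{T}_{\alpha,\beta}^{\infty}$ by means of the Kunze--Stein estimate \eqref{Kunze}. Since $\widehat{T}_{\alpha,\beta}^{\infty}$ is the convolution operator with the $K$-bi-invariant kernel $\kappa_{\alpha,\beta}^{\infty}$, we have by \eqref{Kunze} that
\begin{equation*}
\bigl\Vert \widehat{T}_{\alpha,\beta}^{\infty}\bigr\Vert_{L^{p}(M)\to L^{p}(M)}
\leq \int_{G}\bigl\vert \kappa_{\alpha,\beta}^{\infty}(g)\bigr\vert\,
\varphi_{-i\eta_{\Gamma}}(g)^{s(p)}\,dg,
\end{equation*}
so everything is reduced to showing that the right-hand side is finite for all $p\in(1,\infty)$. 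First I would pass to the Cartan decomposition $G=K(\overline{A^{+}})K$ and rewrite the integral in radial coordinates using \eqref{tgamma}, so that it becomes an integral over $r=d(x_0,gx_0)\in[1,\infty)$ of $\vert\kappa_{\alpha,\beta}^{\infty}(r)\vert\,\varphi_{-i\eta_{\Gamma}}(r)^{s(p)}$ against the Jacobian of the polar decomposition, which in rank one grows like $e^{2\rho r}$ for large $r$.

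The two inputs that make this work are the large-$r$ asymptotics of the spherical function and of the kernel. For the spherical function, since $\eta_{\Gamma}\in C_{\rho}\cap S(0,(\rho^{2}-\lambda_0)^{1/2})$ lies strictly inside the relevant region, the standard Harish-Chandra/Anker-type estimate gives $\varphi_{-i\eta_{\Gamma}}(r)\leq c\,(1+r)^{N}e^{(\vert\eta_{\Gamma}\vert-\rho)r}$ for some $N$, so that $\varphi_{-i\eta_{\Gamma}}(r)^{s(p)}$ contributes a factor decaying like $e^{s(p)(\vert\eta_{\Gamma}\vert-\rho)r}$. For the kernel, the key point is that for $\alpha\in(0,1)$ the oscillatory multiplier $m_{\alpha,\beta}$ produces a kernel $\kappa_{\alpha,\beta}^{\infty}$ whose radial part, away from the origin, decays rapidly; concretely one expects a pointwise bound of the form $\vert\kappa_{\alpha,\beta}^{\infty}(r)\vert\leq c_{N}\,e^{-\rho r}(1+r)^{-N}$ for every $N$, obtained by representing the kernel via the inverse spherical transform, inserting the Harish-Chandra expansion of $\varphi_{\lambda}$, and performing repeated integration by parts (stationary phase) in the $\lambda$-variable to exploit the oscillation $e^{i(\lambda^{2}+\rho^{2})^{\alpha/2}}$ together with the decay coming from $(\lambda^{2}+\rho^{2})^{-\beta/2}$. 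I would either cite the corresponding kernel estimates already established in \cite{GIUME} for the symmetric space $X$, or reprove the needed radial decay directly.

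Combining the two asymptotics, the integrand behaves for large $r$ like $e^{-\rho r}\,e^{s(p)(\vert\eta_{\Gamma}\vert-\rho)r}\,e^{2\rho r}$ up to polynomial factors, i.e. like $e^{(\rho+s(p)(\vert\eta_{\Gamma}\vert-\rho))r}$. Because $\vert\eta_{\Gamma}\vert<\rho$ and $s(p)=2\min(1/p,1/p')\in(0,1]$, the exponent can be controlled; the decisive observation is that the exponential growth $e^{2\rho r}$ of the Jacobian is beaten by the combination of the $e^{-\rho r}$ decay of the kernel and the $e^{s(p)(\vert\eta_{\Gamma}\vert-\rho)r}$ decay of the spherical function, and any residual slack is absorbed by the rapid polynomial decay of the kernel. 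The main obstacle, and the step requiring the most care, is precisely establishing the rapid radial decay of $\kappa_{\alpha,\beta}^{\infty}$: one must make the oscillatory-integral/stationary-phase analysis in the $\lambda$-variable rigorous and uniform in $r$, keeping track of the Plancherel density $\vert\mathbf{c}(\lambda)\vert^{-2}$ and of the contribution of the full Harish-Chandra series, so that the bound holds for every exponent $N$ and thus guarantees convergence of the integral for all $p\in(1,\infty)$ irrespective of the precise value of $\operatorname{Re}\beta$.
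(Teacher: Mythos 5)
Your opening move is the same as the paper's: by \eqref{Kunze} everything reduces to showing that $I=\int_{|g|\geq 1}|\kappa_{\alpha,\beta}(g)|\,\varphi_{-i\eta_{\Gamma}}(g)^{s(p)}\,dg<\infty$, and the paper likewise estimates this integral in polar coordinates (over concentric shells). But your quantitative bookkeeping at the end is wrong, and the error is fatal. With your claimed kernel bound $|\kappa_{\alpha,\beta}^{\infty}(r)|\leq c_{N}e^{-\rho r}(1+r)^{-N}$, the exponential rate of the integrand is
\begin{equation*}
-\rho + s(p)\left( |\eta_{\Gamma}|-\rho \right) + 2\rho
=\rho\left( 1-s(p)\right) + s(p)|\eta_{\Gamma}| \;\geq\; |\eta_{\Gamma}| \;\geq\; 0 ,
\end{equation*}
since $s(p)\leq 1$. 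So the integrand does \emph{not} decay: it grows (or is at best neutral when $p=2$ and $\eta_{\Gamma}=0$), and no amount of polynomial decay $(1+r)^{-N}$ can absorb a nonnegative exponential rate. Your "decisive observation" that the Jacobian $e^{2\rho r}$ is beaten by $e^{-\rho r}\cdot e^{s(p)(|\eta_{\Gamma}|-\rho)r}$ is false for every $p\neq 2$; the argument as written proves nothing beyond (essentially) the $L^{2}$ case.

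The missing idea is the contour shift, i.e. the use of the \emph{analyticity} of $m_{\alpha,\beta}$ in the strip $S_{\rho}=\{|\operatorname{Im}\lambda|<\rho\}$ (it has poles only at $\pm i\rho$). Shifting the inverse spherical transform to the horizontal line at height $v\rho$, $0<v<1$, upgrades the kernel decay from $e^{-\rho r}$ to $e^{-(1+v)\rho r}$ (times rapidly decaying factors coming from the derivative bounds \eqref{in2}, which is the only role the oscillation $\alpha<1$ plays: it controls polynomial growth, not the exponential rate). The exponent then becomes
\begin{equation*}
-(1+v)\rho + s(p)\left( |\eta_{\Gamma}|-\rho\right) +2\rho
=(1-v)\rho - s(p)\left( \rho -|\eta_{\Gamma}|\right),
\end{equation*}
which is negative precisely when $v>1-s(p)+s(p)|\eta_{\Gamma}|/\rho = v_{\Gamma}(p)$, using $1-s(p)=|2/p-1|$ in \eqref{v(p)}. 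Since $v_{\Gamma}(p)<1$ for all $p\in(1,\infty)$, such a $v<1$ always exists, and this — not the size of $\operatorname{Re}\beta$ or the polynomial decay — is what makes $I$ finite. This is exactly the content of the paper's Proposition \ref{propos1}: membership of $m_{\alpha,\beta}$ in the class $\mathcal{M}(v,N,1-\alpha)$ with $v>v_{\Gamma}(p)$, where the analyticity in $\mathcal{T}^{v}$ is an explicit hypothesis and the shell estimate \eqref{Ij} evaluates the multiplier on the shifted contour $\lambda+i\rho$. In short: your skeleton is right, but without the strip analyticity and the $p$-dependent contour shift, the exponentials simply do not close.
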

	
	For the proof of the proposition above we shall make use of the Kunze and
	Stein phenomenon. For that we need to introduce some notation.
	
	For $p\in (1,\infty )$, set
	\begin{equation}
	v_{\Gamma }(p)=2\min \{(1/p),(1/p^{\prime })\}\frac{|\eta _{\Gamma }|}{\rho }%
	+|(2/p)-1|,  \label{v(p)}
	\end{equation}%
	where $p^{\prime }$ is the conjugate of $p$ and $\eta _{\Gamma }\in
	\mathfrak{a^{\ast }}$ is the vector appearing in (\ref{Kunze}).
	
	For $N\in \mathbb{N}$, $v\in \mathbb{R}$ and $\theta \in \left( 0,1\right) $, we say that the multiplier $m$ belongs in the class $\mathcal{M}
	(v,N,\theta )$, if
	
	\begin{itemize}
		\item $m$ is analytic inside the strip $\mathcal{T}^v=\mathfrak{a^*}
		+ivC_{\rho}$ and
		
		\item for all $k\in \mathbb{N}$ with $k\leq N$, $\partial ^{k}m(\lambda )$
		extends continuously to the whole of $\mathcal{T}^{v}$ with
		\begin{equation}
		|\partial ^{k}m(\lambda )|\leq c(1+|\lambda |^{2})^{-k\theta /2}:=<\lambda
		>^{-k\theta }.  \label{decayrate}
		\end{equation}
	\end{itemize}
	
	We have the following:
	
	\begin{proposition}
		\label{propos1} Fix $p\in (1,\infty )$ and consider an even function $m\in
		\mathcal{M}(v,N,\theta )$, with $v>v_{\Gamma }(p)$ and $N=\left[ \frac{n+1}{
			2\theta }\right] +1$. If $\kappa $ is the inverse spherical transform of $m$
		in the sense of distributions, and $\kappa ^{\infty }$ its part away from
		the origin, then, the convolution operator $\widehat{T}_{\kappa }^{\infty }$
		with kernel $\kappa ^{\infty }$, is bounded on $L^{p}(M).$
	\end{proposition}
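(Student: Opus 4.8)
The plan is to derive the boundedness of $\widehat{T}_\kappa^\infty$ on $L^p(M)$ from the Kunze and Stein estimate \eqref{Kunze}. Since $\kappa^\infty$ is $K$-bi-invariant, \eqref{Kunze} reduces the whole statement to the scalar bound
\begin{equation*}
\int_G |\kappa^\infty(g)|\,\varphi_{-i\eta_\Gamma}(g)^{s(p)}\,dg<\infty .
\end{equation*}
Passing to polar coordinates in the rank one setting, where $g=k(\exp tH_0)k'$ and the Haar measure satisfies $dg\asymp\delta(t)\,dt$ with $\delta(t)\le Ce^{2\rho t}$ for $t\ge 1$, and using the classical estimate $\varphi_{-i\eta_\Gamma}(\exp tH_0)\le C(1+t)\,e^{(|\eta_\Gamma|-\rho)t}$, it suffices to prove that
\begin{equation*}
\int_1^\infty |\kappa^\infty(\exp tH_0)|\,(1+t)^{s(p)}\,e^{s(p)(|\eta_\Gamma|-\rho)t}\,e^{2\rho t}\,dt<\infty .
\end{equation*}
Everything thus rests on a sharp pointwise estimate of the kernel for large $t$.

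The core step is to show that, for every $v'<v$,
\begin{equation*}
|\kappa^\infty(\exp tH_0)|\le C_{v'}\,t^{-N}\,e^{-(1+v')\rho t},\qquad t\ge 1 .
\end{equation*}
To get this I would start from $\kappa=\mathcal{H}^{-1}m$, insert the rank one Harish-Chandra expansion $\varphi_\mu=\mathbf{c}(\mu)\Phi_\mu+\mathbf{c}(-\mu)\Phi_{-\mu}$ with $\Phi_\mu(\exp tH_0)\sim e^{(i\mu-\rho)t}$, and exploit that $m$ is even while $d\lambda/|\mathbf{c}(\lambda)|^2$ is invariant under $\lambda\mapsto-\lambda$ to collapse the inversion integral to
\begin{equation*}
\kappa(\exp tH_0)=c\int_{\mathfrak{a}^{\ast}} m(\lambda)\,\frac{\Phi_\lambda(\exp tH_0)}{\mathbf{c}(-\lambda)}\,d\lambda .
\end{equation*}
Since $m$ is analytic on $\mathcal{T}^{v}=\mathfrak{a}^{\ast}+ivC_\rho$ and (in the cases of interest) $v_\Gamma(p)<1$, I may assume $v<1$ and shift the contour to $\operatorname{Im}\lambda=v'\rho$ for any $v'<v$; on this horizontal line $\Phi$ and $1/\mathbf{c}$ remain holomorphic, no zeros of $\mathbf{c}$ being crossed, and $\Phi_{\lambda+iv'\rho}(\exp tH_0)$ produces the exponential gain $e^{-(1+v')\rho t}$ while leaving the oscillatory factor $e^{i\lambda t}$.

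The resulting $\lambda$-integral is then treated by $N$ integrations by parts against $e^{i\lambda t}$, each contributing a factor $t^{-1}$; this is exactly where the choice of $N$ and the decay exponent $\theta$ enter. Combining the bounds $|\partial^k m(\lambda)|\le\langle\lambda\rangle^{-k\theta}$ coming from $\mathcal{M}(v,N,\theta)$ with the standard rank one estimates $|\partial^j(1/\mathbf{c}(-\lambda))|\lesssim\langle\lambda\rangle^{(n-1)/2-j}$, uniform in the strip, the worst term after $N$ differentiations has integrand $\lesssim\langle\lambda\rangle^{(n-1)/2-N\theta}$; this is integrable over $\mathfrak{a}^{\ast}$ precisely because $N>\tfrac{n+1}{2\theta}$, which is guaranteed by $N=[\tfrac{n+1}{2\theta}]+1$. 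This yields the displayed kernel estimate.

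Substituting it into the reduced integral leaves an exponential factor with exponent $\rho\,[\,1-v'-s(p)\,]+s(p)|\eta_\Gamma|$; recalling the elementary identity $1-s(p)=|2/p-1|$, this exponent is negative as soon as $v'>|2/p-1|+s(p)|\eta_\Gamma|/\rho=v_\Gamma(p)$, see \eqref{v(p)}. Choosing $v'\in(v_\Gamma(p),v)$ therefore makes the integral converge, the polynomial factors $t^{-N}(1+t)^{s(p)}$ being harmless, and the proof is complete. The main obstacle is the pointwise kernel estimate: justifying the contour shift (holomorphy of $\Phi_\lambda$ and $1/\mathbf{c}(-\lambda)$ and the absence of residues in the strip $0\le\operatorname{Im}\lambda\le v'\rho$) together with the uniform control of all derivatives of the Plancherel factor there, for it is the interplay between the analyticity width $v$, the decay rate $\theta$ and the polynomial growth of $|\mathbf{c}|^{-2}$ that reproduces exactly the threshold $v_\Gamma(p)$.
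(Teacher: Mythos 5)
Your proposal is correct, and its skeleton coincides with the paper's: both pass through the Kunze--Stein bound (\ref{Kunze}) to reduce the proposition to the finiteness of $\int_{|g|\geq 1}|\kappa(g)|\,\varphi_{-i\eta_{\Gamma}}(g)^{s(p)}\,dg$, and both ultimately exploit the same interplay between the analyticity width $v$, the decay rate $\theta$, and the choice $N=\left[\frac{n+1}{2\theta}\right]+1$. The difference is in how the reduced integral is treated. The paper decomposes it over the shells $U_{j+1}\setminus U_{j}$ and invokes, as a black box, the estimate (\ref{Ij}) from Lohou\'e--Marias (which rests on Anker's Proposition 5): each shell integral is bounded by $j^{-N}$ times an $L^{2}$-type expression in $\partial^{k}m$ on a shifted line, this expression is finite by (\ref{decayrate}), and $\sum_{j}j^{-N}<\infty$ since $N>1$. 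You instead open the black box and reprove its content in rank one: a pointwise kernel bound $|\kappa^{\infty}(\exp tH_{0})|\lesssim t^{-N}e^{-(1+v')\rho t}$ via the Harish-Chandra expansion, a contour shift to $\operatorname{Im}\lambda=v'\rho$ with $v_{\Gamma}(p)<v'<\min(v,1)$, and $N$ integrations by parts, followed by a direct polar-coordinate computation. Your bookkeeping is right: the worst term $\langle\lambda\rangle^{(n-1)/2-N\theta}$ is integrable exactly because $N\theta>\frac{n+1}{2}$, and the final exponent $\rho\left[1-v'-s(p)\right]+s(p)|\eta_{\Gamma}|$ is negative precisely when $v'>v_{\Gamma}(p)$, using $1-s(p)=|2/p-1|$. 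What your route buys is self-containedness and a transparent explanation of where the threshold $v_{\Gamma}(p)$ comes from; what it costs is that the technical core --- holomorphy and uniform polynomial bounds, with derivatives, of the coefficients $\Gamma_{k}(\lambda)$ and of $1/\mathbf{c}(-\lambda)$ in the strip, and control of the full series $\sum_{k}\Gamma_{k}(\lambda)e^{-kt}$ rather than just its leading term --- is only sketched; these are exactly the facts packaged in the estimate the paper cites, so a complete write-up of your argument would have to supply or cite them, which is what the paper's shell decomposition avoids doing explicitly.
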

	
	\begin{proof}
		Since $M$ belongs in the class $(KS)$, then, according to Kunze and Stein
		phenomenon, we have that
		\begin{eqnarray}
		||\widehat{T}_{\kappa }^{\infty }||_{L^{p}(M)\rightarrow L^{p}(M)} &\leq
		&\int_{G}\left\vert \kappa ^{\infty }(g)\right\vert \varphi _{-i\eta
			_{\Gamma }}(g)^{s(p)}dg  \notag  \label{SkLp} \\
		&\leq &\int_{|g|\geq 1}\left\vert \kappa (g)\right\vert \varphi _{-i\eta
			_{\Gamma }}(g)^{s(p)}dg:=I.  \label{KS}
		\end{eqnarray}
		
		So, to finish the proof of Proposition \ref{propos1}, we have to show that
		if $m$ satisfies (\ref{decayrate}), then the integral $I$ in (\ref{SkLp}) is
		finite. To prove that, we will slightly modify the proof of the Theorem 1 in
		\cite{LOMAjga} which is based on Proposition 5 of \cite{AN}. To estimate the
		integral $I$, we shall estimate $\kappa ^{\infty }$ over concentric shells.
		Set
		
		\begin{equation*}
		V_{r}=\{H\in \mathfrak{a}:|H|\leq r\},
		\end{equation*}
		and
		\begin{equation*}
		V_{r}^{+}=V_{r}\cap \overline{\mathfrak{a}_{+}}.
		\end{equation*}
		Note that in the rank one case, $V_{r}=\left[ -r,r\right] $, and $V_{r}^{+}= %
		\left[ 0,r\right] $.
		
		Set also
		\begin{equation*}
		\ U_{r}=\{x=k_{1}(\exp H)k_{2}\in G:k_{1},k_{2}\in K,\;H\in
		V_{r}^{+}\}=K(\exp V_{r}^{+})K.
		\end{equation*}
		Using the Cartan decomposition of $G$, the integral $I$ is written as
		\begin{equation}
		I=\sum_{j\geq 1}\int_{U_{j+1}\backslash U_{j}}|\kappa (g)|\varphi _{-i\eta
			_{\Gamma }}(g)^{s(p)}dg:=\sum_{j\geq 1}I_{j}.  \label{cocentric}
		\end{equation}
		Set $b=n-1$ and let $b^{\prime }$ be the smallest integer $\geq b/2$. Set
		also $N=\left[ \frac{n+1}{2\theta }\right] +1$, $\theta \in (0,1)$. In \cite[
		p.645]{LOMAjga}, using \cite[p.608]{AN}, it is proved that if $m\in \mathcal{%
			\ M}\left( v,N,\theta \right) $, then for $v>v_{\Gamma }(p)$ and $j\geq 1$,
		\begin{equation}
		I_{j}\leq cj^{-N}\sum_{0\leq k\leq N}\left( \int_{\mathfrak{a\ast }
		}(<\lambda >^{b^{\prime }-N+k}|\partial _{\lambda }^{k}m(\lambda +i\rho
		)|)^{2}d\lambda \right) ^{1/2}.  \label{Ij}
		\end{equation}
		
		Then, using the estimates of the derivatives of $m(\lambda )$
		\begin{equation*}
		|\partial ^{k}m(\lambda )|\leq c<\lambda >^{-k\theta },
		\end{equation*}
		we obtain that
		\begin{align*}
		I_{j}& \leq cj^{-N}\sum_{0\leq k\leq N}\left( \int_{\mathfrak{a\ast }
		}(<\lambda >^{b^{\prime }-N+k}<\lambda >^{-k\theta })^{2}d\lambda \right)
		^{1/2} \\
		& \leq cj^{-N}\sum_{0\leq k\leq N}\left( \int_{\mathfrak{a\ast }}(<\lambda
		>^{b^{\prime }-N+k(1-\theta )})^{2}d\lambda \right) ^{1/2} \\
		& \leq cj^{-N}\left( \int_{\mathfrak{a\ast }}<\lambda >^{2(b^{\prime
			}-\theta N)}d\lambda \right) ^{1/2} \\
		& \leq cj^{-N}\left( \int_{0}^{\infty }(1+\lambda ^{2})^{b^{\prime }-\theta
			\left( \lbrack \frac{n+1}{2\theta }]+1\right) }d\lambda \right) ^{1/2}.
		\end{align*}
		Using the fact that $b^{\prime }\leq n/2$ and that $[\frac{n+1}{2\theta }]=
		\frac{n+1}{2\theta }-q$, $q\in \lbrack 0,1)$, it follows that
		\begin{align}
		I_{j}& \leq cj^{-N}\left( \int_{0}^{\infty }(1+\lambda ^{2})^{\frac{n}{2}
			-\theta \left( \frac{n+1}{2\theta }-q\right) -\theta }d\lambda \right) ^{1/2}
		\label{j^-N} \\
		& \leq cj^{-N}\left( \int_{0}^{\infty }(1+\lambda ^{2})^{-\frac{1}{2}-\theta
			(1-q)}d\lambda \right) ^{1/2}  \notag \\
		& \leq cj^{-N},  \notag
		\end{align}
		since $2\left( \frac{1}{2}+\theta (1-q)\right) >1$.
		
		From (\ref{j^-N}) it follows that
		\begin{equation*}
		\ \int_{|g|\geq 1}\left\vert \kappa ^{\infty }(g)\right\vert \varphi
		_{-i\eta _{\Gamma }}(g)^{s(p)}dg=\sum_{j\geq 1}I_{j}\leq c\sum_{j\geq
			1}j^{-N}<\infty ,
		\end{equation*}
		since $N>1$.
	\end{proof}
	
	\bigskip
	
	\begin{proof}[Proof of Proposition \protect\ref{infinity}]
		Note first that $m_{\alpha ,\beta }\left( \lambda \right) $ has poles only
		at $\lambda =\pm i\rho $. So, the function $\lambda \longrightarrow
		m_{\alpha ,\beta }\left( \lambda \right) $ is analytic in the strip $S_{\rho
		}=\left\{ z\in \mathbb{C}:\left\vert \operatorname{Im}z\right\vert <\rho \right\} $.
		Secondly, for every $k\in \mathbb{N}$, it holds that
		\begin{equation}
		\left\vert \partial ^{k}m_{a,\beta }(\lambda )\right\vert \leq c(1+\lambda
		)^{-Re\beta -k(1-a)}\leq c(1+\lambda ^{2})^{-k(1-\alpha )/2},\text{ }\lambda
		\in S_{\rho }.  \label{in2}
		\end{equation}
		
		Note also that for any $p\in \left( 1,\infty \right) $, from (\ref{v(p)}) it
		follows that $v_{\Gamma }(p)<1$. So, for every $p\in \left( 1,\infty \right)
		$, there exists $v^{\prime }\left( p\right) <1$, such that $v^{\prime
		}\left( p\right) >v_{\Gamma }(p)$. It follows that $\mathfrak{a^{\ast }}%
		+iv^{\prime }\left( p\right) C_{\rho }\subset S_{\rho }$, which combined
		with (\ref{in2}), implies that for any $p\in \left( 1,\infty \right) $, and $%
		N\in \mathbb{N}$, $m_{\alpha ,\beta }\in \mathcal{M}(v^{\prime }\left(
		p\right) ,N,1-\alpha )$. Thus, Proposition \ref{propos1} applies and
		Proposition \ref{infinity} follows.
	\end{proof}
	
	\subsection{Proof of Theorem \protect\ref{alphaeq1}}
	
	To begin with, let us recall that in \cite{SCHO}, the operator $T_{\alpha
		,\beta }$, $Re\beta \geq 0$, $\alpha >0$ is also expressed by the integral:
	\begin{equation}
	T_{\alpha ,\beta }(f)(x)=\frac{1}{\Gamma (\beta /\alpha )}\int_{0}^{\infty
	}\sigma ^{\beta /\alpha -1}(f\ast q_{\sigma ,\alpha })(x)\;d\sigma ,\quad
	f\in C_{0}^{\infty }(X),  \label{Ta,b}
	\end{equation}%
	where $q_{\sigma ,\alpha }$, $\sigma >0$, is the inverse spherical Fourier
	transform of the function $\lambda \rightarrow e^{(i-\sigma )(\lambda
		^{2}+\rho ^{2})^{\alpha /2}}$. Note that $q_{\sigma ,\alpha }$ is $K$-bi-invariant as the inverse spherical Fourier transform of an even
	function, \cite{AN}. Then, observe that the convolution operator $%
	T_{q_{\sigma ,\alpha }}=\ast q_{\sigma ,\alpha }$ is equal to $e^{(i-\sigma
		)\Delta _{X}^{\alpha /2}}$. So, by the spectral theorem, $T_{q_{\sigma
			,\alpha }}$ is bounded on $L^{2}(X)$, with
	\begin{equation}
	\Vert T_{q_{\sigma }}\Vert _{L^{2}(X)\rightarrow L^{2}(X)}\leq \sup_{\lambda
		>0}\left\vert e^{(i-\sigma )(\lambda ^{2}+\rho ^{2})^{\alpha /2}}\right\vert
	\leq e^{-\sigma \rho ^{\alpha }}.  \label{L(2(X))}
	\end{equation}
	
	For simplicity, for $\alpha =1$, set $q_{\sigma ,1}=q_{\sigma }$. Thus, $%
	T_{q_{\sigma }}=\ast q_{\sigma }$ is equal to $e^{(i-\sigma )\Delta
		_{X}^{1/2}}$. Let us now define the operator $\widehat{T}_{1,\beta }$ on the
	quotient $M=\Gamma \backslash X$. For that, recall that $\widehat{T}%
	_{q_{\sigma }}$, as it is the case of $\widehat{T}_{1,\beta }$, is initially
	defined as a convolution operator
	\begin{equation}
	\widehat{T}_{q_{\sigma }}f(x)=\int_{G}q_{\sigma }(y^{-1}x)f(y)dy,\quad f\in
	C_{0}^{\infty }(M).  \label{33}
	\end{equation}
	
	Set $q_{\sigma }(x,y)=q_{\sigma }(y^{-1}x)$ and
	\begin{equation}
	\widehat{q_{\sigma }}(x,y)=\sum_{\gamma \in \Gamma }q_{\sigma }(x,\gamma
	y)=\sum_{\gamma \in \Gamma }q_{\sigma }((\gamma y)^{-1}x).  \label{conv}
	\end{equation}
	
	\begin{proposition}
		\label{forallgroups}If either $\delta (\Gamma )<2\rho $ or $\Gamma \in (CT) $%
		, then the series (\ref{conv}) is convergent and the operator $\widehat{T}
		_{q_{\sigma }}$ on $M$ is given by
		\begin{equation}
		\widehat{T}_{q_{\sigma }}f(x)=\int_{M}\hat{q}_{\sigma }(x,y)f(y)dy,\quad
		f\in C_{0}^{\infty }(M).  \label{Tqhat}
		\end{equation}
	\end{proposition}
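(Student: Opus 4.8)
The plan is to deduce everything from a single pointwise bound on $q_\sigma$, namely a decay of order $e^{-2\rho\, d(x,y)}$, which will let me dominate the series (\ref{conv}) by a Poincar\'e series (\ref{poinc}); once absolute convergence is known, the representation (\ref{Tqhat}) follows by unfolding the convolution integral (\ref{33}) over a fundamental domain of $\Gamma$.

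First I would prove that there is $C_\sigma>0$ with
\begin{equation*}
|q_\sigma(y^{-1}x)|\leq C_\sigma\,e^{-2\rho\,d(x,y)},\qquad d(x,y)\geq 1 .
\end{equation*}
Since $q_\sigma$ is the inverse spherical transform of the even, rapidly decreasing symbol $\lambda\mapsto e^{(i-\sigma)(\lambda^2+\rho^2)^{1/2}}$, it lies in the Harish-Chandra Schwartz space and hence decays at least like $(1+d)^{-N}e^{-\rho d}$ for every $N$; but this generic rate $\rho$ is \emph{not} enough. The point is that the symbol extends holomorphically to the strip $S_\rho$, with branch points only at $\lambda=\pm i\rho$, so in the inversion formula I may shift the contour to the edge $\lambda\mapsto\lambda+i\rho$ of the strip, exactly as in (\ref{Kunze})--(\ref{Ij}). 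Combined with the factor $e^{-\rho d}$ coming from the Harish-Chandra expansion of $\varphi_\lambda$ and the Plancherel density, this shift produces a second factor $e^{-\rho d}$, whence the sharp rate $2\rho$ (equivalently, subordinating $e^{(i-\sigma)\Delta_X^{1/2}}$ to the heat semigroup and evaluating by the saddle point method gives $d^{-3/2}e^{-2\rho d}$). Establishing this doubling of the exponent is the main obstacle, and it is precisely the rate that matches the two hypotheses; the required estimates are those of \cite{GIUME, AN}.

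Granting the bound, I would sum over $\Gamma$. Because $\Gamma$ is discrete and torsion free it acts properly discontinuously on $X$, so for fixed $x,y$ only finitely many $\gamma$ satisfy $d(x,\gamma y)<1$, and those terms are finite since $q_\sigma$ is continuous off the diagonal; for the remaining terms the estimate gives
\begin{equation*}
\sum_{\gamma\in\Gamma}|q_\sigma(x,\gamma y)|\leq C+C_\sigma\sum_{\gamma\in\Gamma}e^{-2\rho\,d(x,\gamma y)}=C+C_\sigma\,P_{2\rho}(x,y).
\end{equation*}
Here lies the role of the hypotheses: each of them forces $P_{2\rho}(x,y)<\infty$. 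Indeed, if $\delta(\Gamma)<2\rho$, then $2\rho>\delta(\Gamma)$ and $P_{2\rho}$ converges by the very definition (\ref{criticalexp}) of the critical exponent; while if $\Gamma\in(CT)$, then $\delta(\Gamma)=2\rho$ and $\Gamma$ is of convergence type, which means exactly that $P_{\delta(\Gamma)}=P_{2\rho}$ converges. In either case the series (\ref{conv}) converges absolutely, locally uniformly in $(x,y)$ off the diagonal, and defines $\widehat{q_\sigma}(x,y)$.

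Finally, for $f\in C_0^\infty(M)$, regarded as a $\Gamma$-invariant function on $X$, I would unfold the integral in (\ref{33}) over a fundamental domain $\mathcal F$ of $\Gamma$, using $f(\gamma y)=f(y)$:
\begin{equation*}
\widehat{T}_{q_\sigma}f(x)=\int_X q_\sigma(y^{-1}x)f(y)\,dy=\int_{\mathcal F}\Big(\sum_{\gamma\in\Gamma}q_\sigma(x,\gamma y)\Big)f(y)\,dy=\int_M\widehat{q_\sigma}(x,y)f(y)\,dy,
\end{equation*}
which is (\ref{Tqhat}). The interchange of summation and integration is legitimate by Tonelli's theorem: since $f$ has compact support, the previous step yields $\int_{\mathrm{supp}f}\sum_{\gamma\in\Gamma}|q_\sigma(x,\gamma y)|\,|f(y)|\,dy\leq\|f\|_\infty\int_{\mathrm{supp}f}\big(C+C_\sigma P_{2\rho}(x,y)\big)\,dy<\infty$.
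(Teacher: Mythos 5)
Your proposal is correct and follows essentially the same route as the paper: the pointwise bound $|q_\sigma|\lesssim_\sigma (1+d)^{-3/2}e^{-2\rho d}$ of Giulini--Meda, domination of the sum over $\Gamma$ by the Poincar\'e series $P_{2\rho}(x,y)$ (finite under either hypothesis), and unfolding via Weyl's formula to obtain (\ref{Tqhat}). The only differences are cosmetic: the paper phrases the near/far split as $t_\gamma\leq 2$ versus $t_\gamma>2$ using the Cartan decomposition and $K$-bi-invariance, and it leaves implicit the Tonelli argument that you spell out.
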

	
	\begin{proof}
		By the Cartan decomposition of $G$, we may write
		\begin{equation*}
		(\gamma y)^{-1}x=k_{\gamma }\exp (t_{\gamma }H_{0})k_{\gamma }^{\prime },
		\end{equation*}
		where $t_{\gamma }>0,\;k_{\gamma },k_{\gamma }^{\prime }\in K,$ and $%
		\;H_{0}\in \mathfrak{a}_{+}$ with $\left\Vert H_{0}\right\Vert =1$. Then,
		since $q_{\sigma }$ is $K$-bi-invariant, we get that $q_{\sigma }((\gamma
		y)^{-1}x)=q_{\sigma }(\exp t_{\gamma }H_{0}).$
		
		Recall that in \cite[p.103]{GIUME} it is proved that
		\begin{equation}
		\left\vert q_{\sigma }(\exp tH_{0})\right\vert \leq
		\begin{cases}
		c\sigma (t+1)^{-3/2}e^{-2\rho t}, & \sigma >1,t>0, \\
		c(t+1)^{-3/2}e^{-2\rho t}, & 0<\sigma \leq 1,t>2.%
		\end{cases}
		\label{q1}
		\end{equation}%
		Then,
		\begin{eqnarray*}
			|\widehat{q}_{\sigma }(\exp tH_{0})| &\leq &\sum_{\gamma \in \Gamma
			}|q_{\sigma }(\exp t_{\gamma }H_{0})| \\
			&\leq &\sum_{\left\{ \gamma \in \Gamma :\;t_{\gamma }\leq 2\right\}
			}q_{\sigma }(\exp t_{\gamma }H_{0})+\sum_{\left\{ \gamma \in \Gamma
				:\;t_{\gamma }>2\right\} }q_{\sigma }(\exp t_{\gamma }H_{0}).
		\end{eqnarray*}%
		Note that the first sum above is finite, since $\Gamma $ is discrete. Thus,
		we shall deal only with the second sum. Using (\ref{tgamma}) and the
		estimates (\ref{q1}) of $q_{\sigma }$, we have
		\begin{eqnarray*}
			\sum_{\left\{ \gamma \in \Gamma :\;t_{\gamma }>2\right\} }q_{\sigma }(\exp
			t_{\gamma }H_{0}) &\leq &\sum_{\left\{ \gamma \in \Gamma :\;t_{\gamma
				}>2\right\} }c_{\sigma }(t_{\gamma }+1)^{-3/2}e^{-2\rho t_{\gamma }} \\
			&\leq &c_{\sigma }\sum_{\left\{ \gamma \in \Gamma :d(x,\gamma y)>2\right\}
			}e^{-2\rho d(x,\gamma y)} \\
			&\leq &c_{\sigma }P_{2\rho }\left( x,y\right)
		\end{eqnarray*}%
		which is convergent, since by our assumption, either $\delta (\Gamma )<2\rho
		$ or $\Gamma \in (CT)$.
		
		To prove (\ref{Tqhat}), we follow \cite[Prop.4]{FOMAMA}. Since $q_{\sigma }$
		and $f$ are right $K$-invariant, from (\ref{33}) we get that
		\begin{equation*}
		\widehat{T}_{q_{\sigma }}(f)(x)=\int_{G}q_{\sigma
		}(xy^{-1})f(y)dy=\int_{X}q_{\sigma }(x,y)f(y)dy.
		\end{equation*}
		Further, since $f$ is left $\Gamma $-invariant, by Weyl's formula we obtain
		that
		\begin{align*}
		\widehat{T}_{q_{\sigma }}(f)(x) &=\int_{X}q_{\sigma
		}(x,y)f(y)dy=\int_{\Gamma \backslash X}\left( \sum_{\gamma \in \Gamma
		}q_{\sigma }(x,\gamma y)f(\gamma y)\right) dy \\
		&=\int_{M}\widehat{q}_{\sigma }(x,y)f(y)dy.
		\end{align*}
	\end{proof}
	
	\begin{proposition}
		\label{interpol}If either $\delta (\Gamma )<2\rho $ or $\Gamma \in (CT)$,
		then for all $p\in (1,\infty )$, there are constants $c_{p},k_{p}>0$ such
		that
		\begin{equation}
		\Vert \widehat{T}_{q_{\sigma }}\Vert _{L^{p}(M)\rightarrow L^{p}(M)}\leq
		c_{p}%
		\begin{cases}
		e^{-k_{p}\sigma }, & \sigma \geq 1, \\
		\sigma ^{(1-n)(1/2-1/p)}, & \sigma <1.%
		\end{cases}
		\label{pp}
		\end{equation}
	\end{proposition}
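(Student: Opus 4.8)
The plan is to prove the $L^p$ operator-norm estimate \eqref{pp} for $\widehat{T}_{q_\sigma}$ by establishing the two endpoint estimates at $p=2$ and (via duality) near $p=1,\infty$, and then interpolating. The key object is the kernel $\widehat{q}_\sigma(x,y)=\sum_{\gamma\in\Gamma}q_\sigma((\gamma y)^{-1}x)$, which by Proposition \ref{forallgroups} converges and represents $\widehat{T}_{q_\sigma}$ on $M$ whenever $\delta(\Gamma)<2\rho$ or $\Gamma\in(CT)$.

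First I would record the $L^2$ bound. By the spectral theorem on $M$, exactly as in \eqref{L(2(X))} on $X$, the operator $\widehat{T}_{q_\sigma}=e^{(i-\sigma)\Delta_M^{1/2}}$ satisfies
\begin{equation*}
\Vert\widehat{T}_{q_\sigma}\Vert_{L^2(M)\rightarrow L^2(M)}\leq\sup_{\mu\geq\lambda_0}\bigl|e^{(i-\sigma)\mu^{1/2}}\bigr|=e^{-\sigma\lambda_0^{1/2}}\leq e^{-\sigma\rho}\quad(\sigma\geq 1),
\end{equation*}
using that the bottom $\lambda_0$ of the spectrum of $\Delta_M$ is positive (indeed $\geq\rho^2-|\eta_\Gamma|^2>0$ by the spectral-gap information accompanying \eqref{Kunze}). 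For $\sigma<1$ one gets the crude bound $\Vert\widehat{T}_{q_\sigma}\Vert_{L^2\to L^2}\leq 1$. This already gives the $p=2$ case of \eqref{pp}, since $(1-n)(1/2-1/2)=0$.

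Next, to reach the exponents for $p\neq 2$, I would obtain an $L^1\to L^1$ (equivalently, by self-adjointness of the kernel and duality, $L^\infty\to L^\infty$) bound by estimating $\sup_x\int_M|\widehat{q}_\sigma(x,y)|\,dy$. By left-$\Gamma$/right-$K$ invariance this reduces to $\int_X|q_\sigma(x,y)|\,dy=\int_G|q_\sigma(g)|\,dg$, a quantity on the \emph{symmetric} space $X$. Inserting the pointwise bounds \eqref{q1} for $|q_\sigma(\exp tH_0)|$ and integrating against the volume element, whose radial density grows like $e^{2\rho t}$, the factors $e^{-2\rho t}$ cancel and the polynomial factor $(t+1)^{-3/2}$ renders the integral convergent, producing $\Vert\widehat{T}_{q_\sigma}\Vert_{L^1\to L^1}\leq c\,\sigma e^{-k\sigma}$ for $\sigma\geq 1$ and $\leq c$ for $\sigma<1$. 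Interpolating (Riesz--Thorin / Stein interpolation, writing $1/p=(1-\vartheta)/1+\vartheta/2$) between this $L^1\to L^1$ estimate and the $L^2\to L^2$ estimate, and doing the dual interpolation on $(2,\infty)$, yields the claimed $e^{-k_p\sigma}$ decay for $\sigma\geq1$. The delicate part is the small-$\sigma$ regime $\sigma<1$: here the bounds \eqref{q1} only control $q_\sigma$ for $t>2$, and the refined power $\sigma^{(1-n)(1/2-1/p)}$ must come from a more careful analysis of $q_\sigma$ near the origin, tracking the $\sigma$-dependence of the local kernel of $e^{(i-\sigma)\Delta^{1/2}}$ as in the Euclidean wave-operator estimates; this is where I expect the main technical obstacle, and I would handle it by splitting $\widehat{q}_\sigma$ into its local part (where the sharp $\sigma$-power is forced by the singularity of the half-wave kernel and the manifold looks Euclidean) and its part at infinity (controlled by the Poincaré-series convergence, i.e.\ precisely the hypothesis $\delta(\Gamma)<2\rho$ or $\Gamma\in(CT)$), and then interpolate the pieces separately.
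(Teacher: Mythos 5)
Your skeleton --- unfold $\widehat{q}_\sigma$ over $\Gamma$ via Weyl's formula so that everything reduces to the symmetric space $X$, prove an $L^2$ bound by the spectral theorem and an $L^1$/$L^\infty$ bound by integrating the kernel, then interpolate --- is exactly the paper's. The genuine gap is that you never prove the $\sigma<1$ half of \eqref{pp}, and that half is the whole content of the proposition: it is the power $\sigma^{(1-n)(1/2-1/p)}$ that produces the threshold $\operatorname{Re}\beta>(n-1)|1/p-1/2|$ in Theorem \ref{alphaeq1}, whereas the $\sigma\geq1$ regime only needs integrability at infinity. You correctly observe that \eqref{q1} says nothing about $q_\sigma$ near the origin when $\sigma<1$, but then you defer the required estimate to a sketched local/global splitting and ``Euclidean-type'' analysis of the half-wave kernel which you do not carry out. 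In fact no new analysis on $M$ is needed: Giulini and Meda prove the global bound $\Vert q_\sigma\Vert_{L^1(X)}\leq c\,\sigma^{(1-n)/2}$ for $\sigma<1$ (quoted in the paper as \eqref{q_sigmaest}), and the unfolding you already set up (legitimate by Proposition \ref{forallgroups}, hence by the hypothesis on $\Gamma$) gives $\Vert\widehat{T}_{q_\sigma}\Vert_{L^\infty(M)\to L^\infty(M)}\leq\Vert q_\sigma\Vert_{L^1(X)}$. Riesz--Thorin interpolation of this against the $L^2$ bound yields $\bigl(c\,\sigma^{(1-n)/2}\bigr)^{1-2/p}=c'\sigma^{(1-n)(1/2-1/p)}$ for $p>2$, and duality handles $p<2$. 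So the ``main technical obstacle'' you identify is precisely a known estimate on $X$ to which your own reduction leads; without invoking it (or reproving it), your proposal establishes only the large-$\sigma$ part of \eqref{pp}.

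Two further inaccuracies. First, your claimed bound $\Vert\widehat{T}_{q_\sigma}\Vert_{L^1\to L^1}\leq c\,\sigma e^{-k\sigma}$ for $\sigma\geq1$ does not follow from the computation you describe: inserting \eqref{q1} against the volume density $\asymp e^{2\rho t}$ gives exactly $c\,\sigma$, with no exponential gain, and none is possible by this method, since $\int_G q_\sigma(g)\,dg=\mathcal{H}q_\sigma(i\rho)=1$ forces $\Vert q_\sigma\Vert_{L^1(X)}\geq1$ for every $\sigma$. This is repairable --- in the interpolation the exponential decay for $\sigma\geq1$ must come entirely from the $L^2$ factor --- but the intermediate claim as stated is false. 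Second, your justification of the $L^2$ bound is off: the inequality $e^{-\sigma\lambda_0^{1/2}}\leq e^{-\sigma\rho}$ requires $\lambda_0\geq\rho^2$, which is the opposite of what a positive lower bound on $\lambda_0$ gives; moreover the vector $\eta_\Gamma$ and the relation $\lambda_0=\rho^2-|\eta_\Gamma|^2$ accompany the Kunze--Stein hypothesis $(KS)$ of Theorem \ref{alphaless1}, which is not assumed in Proposition \ref{interpol}, and for $\Gamma\in(CT)$ one has $\delta(\Gamma)=2\rho$, so the bottom of the spectrum of $\Delta_M$ is $0$ and there is no spectral gap to invoke. The paper instead gets \eqref{L2M} directly from the spectral theorem, bounding the norm by $\sup_{\lambda>0}\bigl|e^{(i-\sigma)(\lambda^2+\rho^2)^{1/2}}\bigr|\leq e^{-\sigma\rho}$, i.e.\ by the supremum of the multiplier over the spherical spectrum rather than through any appeal to $(KS)$.
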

	
	\begin{proof}
		Recall that $\widehat{T}_{q_{\sigma }}=e^{(i-\sigma )\Delta_M ^{1/2}}$.
		Thus, by the spectral theorem,
		\begin{equation}  \label{L2M}
		\Vert \widehat{T}_{q_{\sigma }}\Vert _{L^{2}(M)\rightarrow L^{2}(M)} \leq
		\sup_{\lambda >0}\left\vert e^{-(i-\sigma )(\lambda ^{2}+\rho
			^{2})^{1/2}}\right\vert \leq e^{-\sigma \rho }.
		\end{equation}
		
		Next, note that if $f\in C_{0}^{\infty }(M)$, then $\Vert f\Vert _{L^{\infty
			}(M)}=\Vert f\Vert _{L^{\infty }(X)}$, since $f$ is left $\Gamma $-invariant. Then, again by Weyl's formula it follows that
		\begin{eqnarray}
		|\widehat{T}_{q_{\sigma }}f(x)| &=&\left\vert \int_{M}\hat{q}_{\sigma
		}(x,y)f(y)dy\right\vert =\left\vert \int_{\Gamma \backslash X}\sum_{\gamma
			\in \Gamma }{q_{\sigma }}(x,\gamma y)f(y)dy\right\vert  \notag  \label{24} \\
		&=&\left\vert \int_{X}q_{\sigma }(x,y)f(y)dy\right\vert \leq \Vert f\Vert
		_{L^{\infty }(X)}\Vert q_{\sigma }\Vert _{L^{1}(X)}  \label{++} \\
		&=&\Vert f\Vert _{L^{\infty }(M)}\Vert q_{\sigma }\Vert _{L^{1}(X)}.  \notag
		\end{eqnarray}%
		But, in \cite[p.101]{GIUME} it is proved that
		\begin{equation}
		\Vert {q_{\sigma }}\Vert _{L^{1}(X)}\leq c%
		\begin{cases}
		\sigma , & \sigma \geq 1, \\
		\sigma ^{(1-n)/2}, & \sigma <1.%
		\end{cases}
		\label{q_sigmaest}
		\end{equation}%
		From (\ref{++}) and (\ref{q_sigmaest}), it follows that $\widehat{T}%
		_{q_{\sigma }}$ is bounded on $L^{\infty }(M)$, with
		\begin{equation}
		\Vert \widehat{T}_{q_{\sigma }}\Vert _{L^{\infty }(M)\rightarrow L^{\infty
			}(M)}\leq \Vert q_{\sigma }\Vert _{L^{1}(X)}.  \label{Loo}
		\end{equation}
		
		By interpolation between (\ref{L2M}) and (\ref{Loo}) and duality, we deduce
		the boundedness of $\widehat{T}_{q_{\sigma }}$ on $L^{p}(M)$, $p>1$.
		
		Further, by the Riesz-Thorin interpolation theorem we have
		\begin{equation*}
		\Vert \widehat{T}_{q_{\sigma }}\Vert _{L^{p_{\theta }}(M)\rightarrow
			L^{p_{\theta }}(M)}\leq c\Vert \widehat{T}_{q_{\sigma }}\Vert _{L^{{2}%
			}(M)\rightarrow L^{{2}}(M)}^{1-\theta }\Vert \widehat{T}_{q_{\sigma }}\Vert
		_{L^{{\infty }}(M)\rightarrow L^{{\infty }}(M)}^{\theta },
		\end{equation*}%
		with $\frac{1}{p_{\theta }}=\frac{1-\theta }{2}$. Choosing $\theta
		=1-(2/p),\;p>2$, we have that
		\begin{equation*}
		\Vert \widehat{T}_{q_{\sigma }}\Vert _{L^{p}(M)\rightarrow L^{p}(M)}\leq
		c\Vert \widehat{T}_{q_{\sigma }}\Vert _{L^{p}(M)\rightarrow
			L^{p}(M)}^{2/p}\Vert \widehat{T}_{q_{\sigma }}\Vert _{L^{\infty
			}(M)\rightarrow L^{\infty }(M)}^{1-(2/p)},
		\end{equation*}%
		and from (\ref{L2M}), (\ref{q_sigmaest}) and (\ref{Loo}), it is
		straightforward to obtain (\ref{pp})\ for $p>2$. The claim for $p\in \left(
		1,2\right) $ follows by duality.
	\end{proof}
	
	\begin{proof}[\textit{End of proof of Theorem \protect\ref{alphaeq1}}.]
		Using (\ref{Ta,b}), we have that
		\begin{eqnarray*}
			\Vert \widehat{T}_{1,\beta }\Vert _{L^{p}(M)\rightarrow L^{p}(M)} &\leq &%
			\frac{1}{\Gamma (\beta )}\int_{0}^{\infty }\sigma ^{\beta -1}\Vert \widehat{T%
			}_{q_{\sigma }}\Vert _{L^{p}(M)\rightarrow L^{p}(M)}\;d\sigma \\
			&\leq &\frac{1}{\Gamma (\beta )}\int_{0}^{1}\sigma ^{\beta -1}\Vert \widehat{%
				T}_{q_{\sigma }}\Vert _{L^{p}(M)\rightarrow L^{p}(M)}\;d\sigma \\
			&+&\frac{1}{\Gamma (\beta )}\int_{1}^{\infty }\sigma ^{\beta -1}\Vert
			\widehat{T}_{q_{\sigma }}\Vert _{L^{p}(M)\rightarrow L^{p}(M)}\;d\sigma .
		\end{eqnarray*}%
		Applying the estimates (\ref{pp}), we get:
		\begin{eqnarray*}
			\Vert \widehat{T}_{1,\beta }\Vert _{L^{p}(M)\rightarrow L^{p}(M)} &\leq &%
			\frac{1}{\Gamma (\beta )}\int_{0}^{1}\sigma ^{\beta -1}\sigma
			^{(1-n)(1/2-1/p)}\;d\sigma \\
			&+&\frac{1}{\Gamma (\beta )}\int_{1}^{\infty }\sigma ^{\beta
				-1}e^{-k_{p}\sigma }\;d\sigma .
		\end{eqnarray*}%
		The second integral above is finite, while the first is convergent provided
		that
		\begin{equation*}
		\ \operatorname{Re}\beta -1-(n-1)(1/2-1/p)>-1,\text{ or }\operatorname{Re}\beta
		/(n-1)>1/2-1/p,
		\end{equation*}%
		and the claim follows for $p>2$. The case $p<2$ follows by duality.
	\end{proof}
	
	\textbf{Acknowledgement}. The author would like to thank Professor Michel
	Marias for his generous and valuable help, and for bringing the problem to
	our attention, as well as Professor Anestis Fotiadis for stimulating
	discussions and support.

\end{document}